
\documentclass[oneside,english]{amsart}
\usepackage[T1]{fontenc}
\usepackage[latin9]{inputenc}
\usepackage{verbatim}
\usepackage{float}
\usepackage{amsthm}
\usepackage{amstext}
\usepackage{amssymb}
\usepackage{graphicx}
\usepackage{esint}

\makeatletter

\floatstyle{ruled}
\newfloat{algorithm}{tbp}{loa}
\providecommand{\algorithmname}{Algorithm}
\floatname{algorithm}{\protect\algorithmname}

\numberwithin{equation}{section}
\numberwithin{figure}{section}
\newenvironment{lyxcode}
{\par\begin{list}{}{
\setlength{\rightmargin}{\leftmargin}
\setlength{\listparindent}{0pt}
\raggedright
\setlength{\itemsep}{0pt}
\setlength{\parsep}{0pt}
\normalfont\ttfamily}%
 \item[]}
{\end{list}}
\theoremstyle{plain}
\newtheorem{thm}{\protect\theoremname}

\makeatother

\usepackage{babel}
\providecommand{\theoremname}{Theorem}

\title{An asymptotic parallel-in-time method for highly oscillatory PDEs} 

\author{Terry Haut \& Beth Wingate}

\begin{document}
\maketitle

\begin{abstract}
We present a new time-stepping algorithm for nonlinear PDEs that exhibit
scale separation in time of a highly oscillatory nature. The algorithm
combines the parareal method---a parallel-in-time scheme introduced
in \cite{LI-MA-TU}---with techniques from the Heterogeneous Multiscale
Method (HMM) (cf. \cite{E-ENG:2003}),
which make use of the slow asymptotic structure of the equations \cite{Majda1998}.

We present error bounds, based on the analysis in \cite{Gander} and \cite{Bal2005},
that demonstrate convergence of the method. A complexity analysis
also demonstrates that the parallel speedup increases arbitrarily
with greater scale separation. Finally, we demonstrate the accuracy
and efficiency of the method on the (one-dimensional) rotating shallow
water equations, which is a standard test problem for new algorithms
in geophysical fluid problems. Compared to exponential integrators
such as ETDRK4 and Strang splitting---which solve the stiff oscillatory
part exactly---we find that we can use coarse time steps
that are orders of magnitude larger (for a comparable accuracy), yielding
an estimated parallel speedup of approximately $100$ for physically
realistic parameter values. For the (one-dimensional) shallow water
equations, we also show that the estimated parallel speedup of this
\textquotedblleft{}asymptotic parareal method\textquotedblright{}
is more than a factor of 10 greater than the speedup obtained from
the standard parareal method.\end{abstract}




\section{Introduction}

We present a new algorithm to integrate nonlinear PDEs that exhibit
scale separation in time. We focus on time scale separation of a highly
oscillatory nature, where standard (explicit or implicit) time-stepping
methods often require time steps that are on the order of the fastest
oscillation to achieve accuracy. This type of equation arises in numerous
scientific applications, including the large-scale simulations of
the ocean and atmosphere that serve as the primary motivation of this
paper \cite{Charney1948,Charney1949}. 

In particular, we consider computing solutions to equations of the
form 
\begin{equation}
\frac{\partial\mathbf{u}}{\partial t}+\frac{1}{\epsilon}\mathcal{L}\mathbf{u}=\mathcal{N}\left(\mathbf{u}\right)+\mathcal{D}\mathbf{u},\,\,\,\mathbf{u}\left(0\right)=\mathbf{u}_{0},\label{eq:full eqn}
\end{equation}
where the linear operator $\mathcal{L}$ has pure imaginary eigenvalues,
the nonlinear term $\mathcal{N}\left(\mathbf{u}\right)$ is of polynomial
type, the operator $\mathcal{D}$ encodes some form of dissipation,
and $\epsilon$ is a small non-dimensional parameter. For notational
simplicity, we let $\mathbf{u}\left(t\right)$ denote the spatial
(vector-valued) function $\mathbf{u}\left(t,\cdot\right)=\left(u_{1}\left(t,\cdot\right),u_{2}\left(t,\cdot\right),\ldots\right)$.
The operator $\epsilon^{-1}\mathcal{L}$ results in temporal oscillations
on an order $\mathcal{O}\left(\epsilon\right)$ time scale, and generally
necessitates small time steps if standard numerical integrators are
used. 

Our approach for integrating (\ref{eq:full eqn}) uses a variant of
the parareal algorithm \cite{LI-MA-TU}, which is a parallel-in-time
method that relies on a cheap coarse solver for computing in serial
a solution with low accuracy, and a more expensive fine solver for
iteratively refining the solutions in parallel. The key novelty in
this paper is to replace the numerical coarse solution of the full
equations (\ref{eq:full eqn}) with a locally asymptotic approximation
of (\ref{eq:full eqn}). This enables us to effectively bypass the
Nyquist constraint imposed by the fastest oscillations, and acheive
much greater parallel speedup. Examples on the rotating shallow water
equations---a standard benchmark against which to test new algorithms
in geophysical fluid applications---demonstrate that this approach
holds promise for increasing the accuracy and speed of geophysical
fluid simulations. In fact, we find (see Section~\ref{sub:Numerical-experiments-on})
that this approach allows us to take step sizes $\Delta T\gg\epsilon$
that are significantly larger than if alternative schemes are used
for the coarse solver, including exponential integrators and split-step
methods (which solve the stiff linear terms exactly). In the context
of large-scale simulations of the ocean and atmosphere, the gains
achieved from spatial parallelization alone are beginning to saturate,
and the results in this paper are a preliminary effort toward achieving
greater efficiency.

We first describe the standard parareal method in more detail, and
some of the challenges for acheiving high parallel speedup for problems
of the form (\ref{eq:full eqn}). The basic approach of the parareal
method is to take large time steps $\Delta T$ in serial using a coarse
integrator of (\ref{eq:full eqn}), and to iteratively refine the
solutions in parallel using small time steps $\Delta t$ and a more
accurate integrator. This can result in significant speedup in real
(wall-clock) time if the parareal iterations converge rapidly, and
either the ratio $\Delta T/\Delta t$ of coarse and fine step sizes
is large, or the cost of the coarse solver is much cheaper than that
of the fine solver (see Section~\ref{sec:Error-and-complexity} for
more details). Early applications of the parareal method include simulations
of molecular dynamics \cite{Baffico2002}, the Navier Stokes equation
\cite{Fischer04aparareal}, and quantum control problems \cite{MadayTurinici2003};
additional references can be found in \cite{Staff:2003}. Although
the parareal method has been most widely used for parabolic-type PDEs,
it has also been analyzed and used for accurate simulation of first
and and second order hyperbolic systems (cf. \cite{FarhatChandesris:2003},
\cite{Gander:2008}, and \cite{Dai}). A recent variant of the parareal
method also allows for the accurate long-time evolution of Hamiltonian
systems \cite{Legoll2012}. Finally, general convergence results for
the parareal algorithm can be found in \cite{Gander,Bal2005} (\cite{Gander}
also numerically demonstrates convergence on the Lorenz equations,
which is of particular relevance to geophysical fluid problems).

Despite the many successes of the parareal method, a basic obstacle
remains for equations of the form (\ref{eq:full eqn}): namely, the
step size $\Delta T$ for a coarse integrator that is based on a standard
method generally must satisfy $\Delta T=\mathcal{O}\left(\epsilon\right)$
in order to achieve any accuracy at all (which is a prerequisite for
convergence of the parareal method). In practice, this can mean that
the coarse integrator in the parareal method must use very small time
steps for solving (\ref{eq:full eqn}), and the parallel speedup can
be minimal. There are, however, some types of highly oscillatory PDEs
where numerical integrators have been developed that can take much
larger time steps $\Delta T\gg\epsilon$ (cf. \cite{HairerLubichWanner:2010}).

In this paper, we use a numerically computed 'locally slow' solution
that is based on the underlying asymptotic structure of (\ref{eq:full eqn}),
and which can allow step sizes $\Delta T\gg\epsilon$ significantly
larger than the Nyquist constraint imposed by the $\mathcal{O}\left(\epsilon\right)$
temporal oscillations (and thus a potentially significant parallel
speedup). A basic observation behind efficiently constructing a slow
solution is that the solution $\mathbf{u}\left(t\right)$ to (\ref{eq:full eqn})
has the asymptotic approximation $\mathbf{u}\left(t\right)=\exp\left(-t/\epsilon\mathcal{L}\right)\overline{\mathbf{u}}\left(t\right)+\mathcal{O}\left(\epsilon\right)$
(cf. \cite{MAJDA:2003}, \cite{Majda1998}, \cite{Wingate2011}),
where the slowly varying function $\overline{\mathbf{u}}\left(t\right)$
satisfies a reduced equation of the form
\begin{equation}
\frac{\partial\overline{\mathbf{u}}}{\partial t}=\overline{\mathcal{N}}\left(\overline{\mathbf{u}}\right)+\overline{\mathcal{D}}\overline{\mathbf{u}},\,\,\,\overline{\mathbf{u}}\left(0\right)=\mathbf{u}_{0}.\label{eq:averaged equation}
\end{equation}
Here the nonlinear term $\overline{N}\left(\overline{\mathbf{u}}\right)$
is given by the time average 
\begin{equation}
\overline{\mathcal{N}}\left(\overline{\mathbf{u}}\left(t\right)\right)=\lim_{\text{T}\rightarrow\infty}\frac{1}{T}\int_{0}^{T}e^{s\mathcal{L}}\mathcal{N}\left(e^{-s\mathcal{L}}\overline{\mathbf{u}}\left(t\right)\right)ds.\label{eq:average of nonlinear term}
\end{equation}
We emphasize that the above time averaging is performed with $\overline{\mathbf{u}}\left(t\right)$
held fixed. Similarly, 
\begin{equation}
\overline{\mathcal{D}}\overline{\mathbf{u}}\left(t\right)=\lim_{\text{T}\rightarrow\infty}\frac{1}{T}\int_{0}^{T}\left(e^{s\mathcal{L}}\mathcal{D}e^{-s\mathcal{L}}\right)\overline{\mathbf{u}}\left(t\right)ds.\label{eq:Du, average}
\end{equation}
Note that $\overline{\mathbf{u}}\left(t\right)$, and its time derivatives,
are formally bounded independently of $\epsilon$, and thus significantly
larger time steps $\Delta T\gg\epsilon$ can be taken to evolve (\ref{eq:averaged equation}).
Section~\ref{sec:alg with no scale separation} also discusses how
a numerical integrator based on a finite version of the time averages
(\ref{eq:average of nonlinear term}) and (\ref{eq:Du, average})
can be interpreted as a smoothed type of integrating factor method,
and allows accuracy even when there is no scale sepatation in time
(i.e. $\epsilon=\mathcal{O}\left(1\right)$); this is useful when
the scale separation is localized in space, and where it is desirable
to have a time step that is constrained only by the slow dynamics.

Despite the many successes of the above averaging procedure in elucidating
important qualitative features (see e.g. \cite{Majda1998} and \cite{Wingate2011}
for geophysical fluid dynamics applications), in practice this approach
may not be accurate enough for moderately small values of $\epsilon$
(e.g. $\epsilon=10^{-2}$ is typical in geophysical fluid applications),
where the implicit constant hidden in the $\mathcal{O}\left(\epsilon\right)$
notation can be significant \cite{Smith2005}. Since the parameter
$\epsilon$ is typically fixed in idealized applications, the resulting
asymptotic approximation cannot be refined without some additional
approach. Another limitation is that the asymptotic approximation
(\ref{eq:averaged equation}) is generally only valid on an $\mathcal{O}\left(1\right)$
time interval, and this situation is usually not improved by adding
more terms in the asymptotic expansion. For many applications, it
is therefore necessary to refine this approach in order to approximate
(\ref{eq:full eqn}) with a given target accuracy and on longer time
intervals. 

Our approach for computing the asymptotic approximation (\ref{eq:averaged equation})---for
the purpose of constructing a slow solution---is based on evaluating
the time averages $\overline{\mathcal{N}}\left(\overline{\mathbf{u}}\left(t\right)\right)$
and $\overline{\mathcal{D}}\overline{\mathbf{u}}\left(t\right)$ numerically;
this approach has also been used in \cite{CastellaChartierFaou2009}
to solve Hamiltonian systems more efficiently (see also \cite{NadigaHechtMargolinSmolarkiewicz:1997}
and \cite{Jones1999} for related approaches in geophysical simulations).
More generally, our numerical scheme for (\ref{eq:averaged equation})
is an instance of the Heterogeneous Multiscale Method (HMM) (cf. \cite{E-ENG:2003, EngquistTsai:2005,ArielEngquistTsai:2009,ArielEngquistKimLeeTsai:2013}
for selective applications to highly oscillatory problems), which
is a very general framework for efficiently computing approximations
to problems that exhibit multiple spatial or temporal scales; a review
of HMM can be found in \cite{WeinanEngquistXiantao2007}. The basic
idea is that, by integrating in time against a carefully chosen smooth
kernel, the time average can be performed over a window of length
$T_{0}=T_{0}\left(\epsilon\right)\ll1/\epsilon$; therefore, the overall
cost of evolving (\ref{eq:averaged equation}) is asymptotically smaller
than the cost of computing (\ref{eq:full eqn}) directly, and can
lead to arbitrarily large efficiency gains. In problems arising in
geophysical fluid applications, the value $\epsilon$ may only be
moderately small (e.g. $\epsilon\approx10^{-2}$), and in such cases
numerically computing the average (\ref{eq:average of nonlinear term})
can be as costly as explicitly integrating the full equation (\ref{eq:full eqn}).
However, the numerical average can itself be performed in an embarassingly
parallel manner (see Section~\ref{sec:An-asymptotic-slow}), and
thus is not expected to impact the overall (wall-clock) speed of the
algorithm. Finally, we remark that for solutions that develop sharp
gradients, it may be necessary to use the modified version of the
parareal method explored in \cite{Dai}. However, this is beyond the
scope of this paper.

The idea of using a coarse solution based on a modified equation is
not new, and the possibility has been mentioned early on in the parareal
literature (cf. \cite{Staff:2003}). In \cite{Maday:2007} and \cite{Engblom:2009},
multi-scale versions of the parareal method are developed for, respectively,
deterministic and stochastic chemical kinetic simulations; in \cite{Engblom:2009},
the coarse solver is based on a deterministic (macroscopic) approximation.
A recent paper \cite{Legoll2012b} applies a version of the parareal
method to systems of ODEs that exhibit fast and slow components. The
multi-scale coarse solver in \cite{Legoll2012b} uses a projection
onto the slow, low-dimensional manifold, and examples are provided
for singularly perturbed ODEs with dissipative-type scale separation.
In contrast to \cite{Maday:2007}, \cite{Engblom:2009}, and \cite{Legoll2012b}
, here we investigate this procedure for a model nonlinear PDE whose
scale separation is of a highly oscillatory nature, and where methods
that work well for stiff dissipative problems (e.g. implicit or exponential
integrators) generally fail to impart significant speedup. Moreover,
the asymptotic approximation (\ref{eq:averaged equation}) to (\ref{eq:full eqn})
cannot be computed explicitly in most cases, and this necessitates
using additional techniques. The locally asymptotic solver developed
here works even when there is no scale separation, which is an important
feature when the time scale separation is a function of space and
time (as occurs in some geophysical fluid applications); in this case,
the time step in the coarse, asymptotic solution is only constrained
by the slow dynamics. 

In Section~\ref{sec:An-asymptotic-slow}, we present a version of
the Heterogeneous Multiscale Method that is appropriate for efficiently
computing the asymptotic approximation (\ref{eq:averaged equation}).
We then present in Sections~\ref{sec:Parareal-Method} a variant
of the parareal method that is based on replacing the coarse solver
with a locally asymptotic approximation. Section~\ref{sec:alg with no scale separation}
shows that, by averaging over a scale on which the slow dynamics is
occuring, this HMM-type coarse solution is able to achieve accuracy
even when there is no scale separation. We provide complexity bounds
for this algorithm, which demonstrate that the parallel speedup increases
arbitrarily as $\epsilon$ decreases. We also present error bounds
that are based on the analysis in \cite{Gander} and \cite{Bal2005},
and that demonstrate convergence of the method under reasonable assumptions.
Finally, Section~\ref{sub:Numerical-experiments-on} discusses some
numerical experiments on the (one-dimensional) rotating shallow water
equations, which serve as a standard first test for new numerical
algorithms in geophysical applications. Our experiments show that,
in contrast to standard versions of the parareal algorithm, the algorithm
can converge to high accuracy in few iterations even when large time
steps $\Delta T\gg\epsilon$ are taken for the coarse solution. In
fact, compared to the exponential time differencing method (ETDRK4),
the integrating factor method, and Strang splitting (cf. \cite{CoxMathews:2002},
\cite{KassamTrefethen:2005}, and \cite{Lawson:1967})---all of which
integrate the stiff linear term $\epsilon^{-1}\mathcal{L}$ exactly---our
algorithm yields an estimated parallel speedup of $\approx100$ for
the physically realistic value of $\epsilon=10^{-2}$. We also show
that, for $\epsilon=10^{-2}$, this parallel speedup is at least $10$
times greater than the speedup that can be achieved by using the standard
parareal method with ETDRK4, OIFS, or Strang splitting as the coarse
solver. Finally, we demonstrate that the asymptotic parareal method
yields high accuracy even when $\epsilon=1$ (i.e. in the absense
of scale separation), and with a parallel speedup that is comparable
to using the standard parareal method with ETDRK4, OIFS, or Strang
splitting as the coarse solver. Using a coarser spatial discretization
in the asymptotic solver may also result in even greater efficiency
gains.

\section{An asymptotic slow solution\label{sec:An-asymptotic-slow}}

We use the Heterogeneous Multiscale Method (HMM) to solve (\ref{eq:averaged equation}),
which relies on computing the averages (\ref{eq:average of nonlinear term})
and (\ref{eq:Du, average}) numerically (see also \cite{CastellaChartierFaou2009}).
The key idea is that, by averaging in time with respect to an appropriate
smooth kernel and over a carefully selected window length $T_{0}=T_{0}\left(\epsilon\right)$,
the cost is asymptotically smaller than $1/\epsilon$ (which is the
cost of solving the full equation (\ref{eq:full eqn}) on an $\mathcal{O}\left(1\right)$
time interval). Once such time averages can be computed, then large
step sizes $\Delta T\gg\epsilon$, coupled with a standard numerical
integrator, can be taken to evolve (\ref{eq:averaged equation}).
For simplicity, we restrict our discussion to computing the time average
(\ref{eq:average of nonlinear term}) (in fact, for the equations
we consider here, the operators $\mathcal{L}$ and $\mathcal{D}$
commute, and so the average $\overline{\mathcal{D}}$ in (\ref{eq:Du, average})
satisfies $\overline{\mathcal{D}}=\mathcal{D}$). 

The basic approach for computing the time average (\ref{eq:average of nonlinear term})
involves the following approximations:
\begin{eqnarray}
\overline{\mathcal{N}}\left(\overline{\mathbf{u}}\left(t\right)\right) & = & \lim_{\text{T}\rightarrow\infty}\frac{1}{T}\int_{0}^{T}e^{s\mathcal{L}}\mathcal{N}\left(e^{-s\mathcal{L}}\overline{\mathbf{u}}\left(t\right)\right)ds\nonumber \\
 & \approx & \frac{1}{T_{0}}\int_{0}^{T_{0}}\rho\left(\frac{s}{T_{0}}\right)e^{s\mathcal{L}}\mathcal{N}\left(e^{-s\mathcal{L}}\overline{\mathbf{u}}\left(t\right)\right)ds\label{eq:numerical average, discretized}\\
 & \approx & \frac{1}{\overline{M}}\sum_{m=0}^{\overline{M}-1}\rho\left(\frac{s_{m}}{T_{0}}\right)e^{s_{m}\mathcal{L}}\mathcal{N}\left(e^{-s_{m}\mathcal{L}}\overline{\mathbf{u}}\left(t\right)\right).\nonumber 
\end{eqnarray}
The smooth kernel $\rho\left(s\right)$, $0\leq s\leq1$, is chosen
so that the length $T_{0}=T_{0}\left(\epsilon\right)$ of the time
window over which the averaging is done is as small as possible, and
that the error introduced by using the trapezoidal rule is negligible
(see e.g. \cite{EngquistTsai:2005} and \cite{Weinan:2003} for an
error analysis). 

More formally, we define the finite time average 
\begin{equation}
\overline{\mathcal{N}}_{\rho,T_{0}}\left(\overline{\mathbf{u}}\left(t\right)\right)=\frac{1}{T_{0}}\int_{0}^{T_{0}}\rho\left(\frac{s}{T_{0}}\right)e^{s\mathcal{L}}\mathcal{N}\left(e^{-s\mathcal{L}}\overline{\mathbf{u}}\left(t\right)\right)ds.\label{eq:finite time average}
\end{equation}
Then we need to choose the kernel $\rho\left(s\right)$ and the parameters
$T_{0}=T_{0}\left(\epsilon\right)$ and $\overline{M}$ so that the
truncation error, 
\[
\left\Vert \overline{\mathcal{N}}\left(\overline{\mathbf{u}}\left(t\right)\right)-\overline{\mathcal{N}}_{\rho,T_{0}}\left(\overline{\mathbf{u}}\left(t\right)\right)\right\Vert ,
\]
and the discretization error,
\[
\left\Vert \overline{\mathcal{N}}_{\rho,T_{0}}\left(\overline{\mathbf{u}}\left(t\right)\right)-\frac{1}{\overline{M}}\sum_{m=0}^{\overline{M}-1}\rho\left(\frac{s_{m}}{T_{0}}\right)e^{\left(s_{m}/\epsilon\right)\mathcal{L}}\mathcal{N}\left(e^{-\left(s_{m}/\epsilon\right)\mathcal{L}}\overline{\mathbf{u}}\left(t\right)\right)\right\Vert ,
\]
are smaller than our desired approximation tolerance. This can be
accomplished by requiring that the kernel $\rho\left(s\right)$ satisfies
$\rho^{\left(m\right)}\left(0\right)=\rho^{\left(m\right)}\left(1\right)=0$,
$m=0,1,\ldots$ It will also be convenient for comparison with Section~\ref{sec:alg with no scale separation}
to change variables $s\rightarrow s/\epsilon$ in the integrand in
(\ref{eq:finite time average}) to obtain the equivalent form 
\[
\overline{\mathcal{N}}_{\rho,T_{0}}\left(\overline{\mathbf{u}}\left(t\right)\right)=\frac{1}{\epsilon T_{0}}\int_{0}^{\epsilon T_{0}}\rho\left(\frac{s}{\epsilon T_{0}}\right)e^{s/\epsilon\mathcal{L}}\mathcal{N}\left(e^{-s/\epsilon\mathcal{L}}\overline{\mathbf{u}}\left(t\right)\right)ds.
\]

To better understand the role of $\rho\left(s\right)$ and the parameters
$T_{0}$ and $\overline{M}$, we informally analyze the above averaging
procedure in more detail. First,
since we are assuming that the nonlinear operator $\mathcal{N}$ in
(\ref{eq:average of nonlinear term}) is of polynomial type and $\mathcal{L}$
has pure imaginary eigenvalues, we can (in principle) expand $\overline{\mathbf{u}}\left(t\right)$
in terms of eigenfunctions of $\mathcal{L}$ and express the nonlinear
term in the form 
\begin{eqnarray}
e^{s\mathcal{L}}\mathcal{N}\left(e^{-s\mathcal{L}}\overline{\mathbf{u}}\left(t\right)\right) & = & \sum_{\lambda_{n}}e^{i\lambda_{n}s}\mathcal{N}_{n}\left(\overline{\mathbf{u}}\left(t\right)\right)\nonumber \\
 & = & \sum_{\lambda_{n}=0}\mathcal{N}_{n}\left(\overline{\mathbf{u}}\left(t\right)\right)+\sum_{\lambda_{n}\neq0}e^{i\lambda_{n}s}\mathcal{N}_{n}\left(\overline{\mathbf{u}}\left(t\right)\right),\label{eq:decomp for nonlinear}
\end{eqnarray}
Here the pure imaginary numbers $i\lambda_{n}$ are linear combinations
of the eigenvalues of $\mathcal{L}$, and the set $\lambda_{n}=0$
corresponds to resonant interactions (see Section~\ref{sub:Rotating-Shallow-Water}
for a concrete example in the context of the rotating shallow water
equations). In fact, using the definition of the averaging $\overline{\mathcal{N}}\left(\overline{\mathbf{u}}\left(t\right)\right)$
operator and the above decomposition, we see that
\[
\overline{\mathcal{N}}\left(\overline{\mathbf{u}}\left(t\right)\right)=\sum_{\lambda_{n}=0}\mathcal{N}_{n}\left(\overline{\mathbf{u}}\left(t\right)\right).
\]
To compare this to the finite time average $\overline{\mathcal{N}}_{\rho,T_{0}}\left(\overline{\mathbf{u}}\left(t\right)\right)$,
use (\ref{eq:decomp for nonlinear}) to express this as
\begin{eqnarray*}
\overline{\mathcal{N}}_{\rho,T_{0}}\left(\overline{\mathbf{u}}\left(t\right)\right) & = & \frac{1}{T_{0}}\int_{0}^{T_{0}}\rho\left(\frac{s}{T_{0}}\right)e^{s\mathcal{L}}\mathcal{N}\left(e^{-s\mathcal{L}}\overline{\mathbf{u}}\left(t\right)\right)ds\\
 & = & \sum_{\lambda_{n}}\left(\int_{0}^{1}\rho\left(s\right)e^{i\lambda_{n}T_{0}s}ds\right)\mathcal{N}_{n}\left(\overline{\mathbf{u}}\left(t\right)\right).
\end{eqnarray*}
Comparing $\overline{\mathcal{N}}_{\rho,T_{0}}\left(\overline{\mathbf{u}}\left(t\right)\right)$
and $\overline{\mathcal{N}}\left(\overline{\mathbf{u}}\left(t\right)\right)$,
we therefore require that 
\begin{equation}
\int_{0}^{1}\rho\left(s\right)e^{i\lambda_{n}T_{0}s}ds\approx\begin{cases}
1, & \,\,\,\,\text{for}\,\,\,\,\lambda_{n}=0,\\
0, & \,\,\,\,\text{for}\,\,\,\,\lambda_{n}\neq0.
\end{cases}\label{eq:oscillatory terms to eliminate}
\end{equation}

In order to satisfy (\ref{eq:oscillatory terms to eliminate}) with
a time window length $T_{0}$ as small as possible, we choose a smooth
kernel $\rho\left(s\right)$ that satisfies $\rho^{\left(m\right)}\left(0\right)=\rho^{\left(m\right)}\left(1\right)=0$.
Repeated integration by parts shows that 
\[
\left|\int_{0}^{1}\rho\left(s\right)e^{i\lambda_{n}T_{0}s}ds\right|\leq C_{m}\left|\lambda_{n}T_{0}\right|^{-m},\,\,\,\, m=1,2,\ldots
\]
In particular, the above calculation indicates that choosing a time
window $T_{0}\left(\epsilon\right)=\epsilon^{-r}$, $0<r<1$, can
formally yield an error
\[
\left\Vert \overline{\mathcal{N}}\left(\overline{\mathbf{u}}\left(t\right)\right)-\overline{\mathcal{N}}_{\rho,T_{0}}\left(\overline{\mathbf{u}}\left(t\right)\right)\right\Vert =\mathcal{O}\left(\epsilon^{m}\right),\,\,\,\, m>1.
\]
Moreover, repeated integration by parts, coupled with the Euler\textendash{}Maclaurin
formula, also shows that the error induced from the trapezoidal rule
is similarly small,
\[
\left\Vert \overline{\mathcal{N}}_{\rho,T_{0}}\left(\overline{\mathbf{u}}\left(t\right)\right)-\frac{1}{M}\sum_{m=0}^{M-1}\rho\left(\frac{s_{m}}{T_{0}}\right)e^{\left(s_{m}/\epsilon\right)\mathcal{L}}\mathcal{N}\left(e^{-\left(s_{m}/\epsilon\right)\mathcal{L}}\overline{\mathbf{u}}\left(t\right)\right)\right\Vert =\mathcal{O}\left(\epsilon^{m}\right),\,\,\,\, m>1.
\]
One commonly used choice of kernel function is given by 
\[
\rho\left(s\right)=\begin{cases}
C\exp\left(-1/\left(s\left(1-s\right)\right)\right), & \,\,\,\,0<s<1,\\
0, & \,\,\,\,\left|s\right|\geq1,
\end{cases}
\]
where the constant $C$ is such so that $\left\Vert \rho\right\Vert _{1}=1$.

Notice that, compared to the asymptotic cost $\epsilon^{-1}$ of solving
the full equation (\ref{eq:full eqn}), arbitrarily large efficiency
gains are possible for the choice $T_{0}\left(\epsilon\right)=\epsilon^{-r}$,
$0<r<1$.

\section{The asymptotic Parareal Method\label{sec:Parareal-Method}}

We briefly review the parareal algorithm, in the context of replacing
the coarse solver with a numerically computed locally asymptotic solution
based on the asymptotic structure of the equations described by \cite{MAJDA:2003},
\cite{Majda1998}, \cite{Wingate2011}\cite{schochet1994,Wingate2011}. 

We suppose that we are interested in solving (\ref{eq:full eqn})
on the time interval $t\in\left[0,1\right]$. Let $\varphi_{t}\left(\mathbf{u}_{0}\right)$
denote the evolution operator associated with (\ref{eq:full eqn}),
so that $\mathbf{u}\left(t\right)=\varphi_{t}\left(\mathbf{u}_{0}\right)$
solves (\ref{eq:full eqn}). In a similar way, let $\psi_{t}\left(\mathbf{u}_{0}\right)$
denote the evolution operator associated with (\ref{eq:averaged equation}),
so that $\overline{\mathbf{u}}\left(t\right)=\psi_{t}\left(\mathbf{u}_{0}\right)$
solves (\ref{eq:averaged equation}). Finally, let $\overline{\varphi}_{t}\left(\mathbf{u}_{0}\right)$
denote the asymptotic approximation at time $t$ that results from
(\ref{eq:averaged equation}):
\[
\overline{\varphi}_{t}\left(\mathbf{u}_{0}\right)=e^{-\left(t/\epsilon\right)\mathcal{L}}\overline{\mathbf{u}}\left(t\right)=e^{-\left(t/\epsilon\right)\mathcal{L}}\psi_{t}\left(\mathbf{u}_{0}\right).
\]
Therefore, we have that $\varphi_{t}\left(\mathbf{u}_{0}\right)-\overline{\varphi}_{t}\left(\mathbf{u}_{0}\right)=\mathcal{O}\left(\epsilon\right)$.

To describe the parareal method, we first divide the time interval
$\left[0,T\right]$ into $N$ subintervals $\left[n\Delta T,(n+1)\Delta T\right]$,
$n=0,\ldots,N-1$. Starting with the identity 
\[
\mathbf{U}_{n}=\overline{\varphi}_{\Delta T}\left(\mathbf{U}_{n-1}\right)+\left(\varphi_{\Delta T}\left(\mathbf{U}_{n-1}\right)-\overline{\varphi}_{\Delta T}\left(\mathbf{U}_{n-1}\right)\right),\,\,\,\,\mathbf{U}_{n}=\mathbf{u}\left(n\Delta T\right),
\]
the parareal method computes approximations $\mathbf{U}_{n}^{k}\approx\mathbf{U}_{n}$
by the iterative procedure: 
\begin{equation}
\mathbf{U}_{n}^{k}=\overline{\varphi}_{\Delta T}\left(\mathbf{U}_{n-1}^{k}\right)+\left(\varphi_{\Delta T}\left(\mathbf{U}_{n-1}^{k-1}\right)-\overline{\varphi}_{\Delta T}\left(\mathbf{U}_{n-1}^{k-1}\right)\right),\,\,\, k=1,2,\ldots\label{eq:HMM parareal iteration}
\end{equation}
At iteration level $k=0$, the slow approximation $\mathbf{U}_{n}^{0}=\overline{\varphi}_{\Delta T}\left(\mathbf{U}_{n-1}^{0}\right)$
is used. Notice that, at iteration level $k$, the quantities $\mathbf{U}_{n-1}^{k-1}$
in the difference $\varphi_{\Delta T}\left(\mathbf{U}_{n-1}^{k-1}\right)-\overline{\varphi}_{\Delta T}\left(\mathbf{U}_{n-1}^{k-1}\right)$
are already computed; consequently, the difference $\varphi_{\Delta T}\left(\mathbf{U}_{n-1}^{k-1}\right)-\overline{\varphi}_{\Delta T}\left(\mathbf{U}_{n-1}^{k-1}\right)$
can be computed in parallel for each $n$. Since the computation of
$\overline{\varphi}_{\Delta T}\left(\mathbf{U}_{n-1}^{k}\right)$
is inexpensive, the overall algorithm is also inexpensive in a parallel
environment if the iterates converge rapidly. This parareal method
is illustrated in Figure \ref{fig:parareal illustration}. We note
that the parareal method can be interpreted as an inexact Newton-type
iteration (cf. \cite{Gander}).

\begin{figure}
\caption{This figure illustrates the asymptotic parareal algorithm. The verticl
axes represents a typical prognostic variable such as $h$, the thickness
of the layer of fluid in the shallow water system. The pink line depicts
the asymptotic solution at the large time steps $n\Delta T$. The
blue line depicts the parallel-in-time, fine scale corrections, using
small time steps $\Delta t\ll\Delta T$. Finally, the black line depicts
the updated solution at the large time steps $n\Delta T$.\label{fig:parareal illustration}}

\includegraphics[scale=0.6]{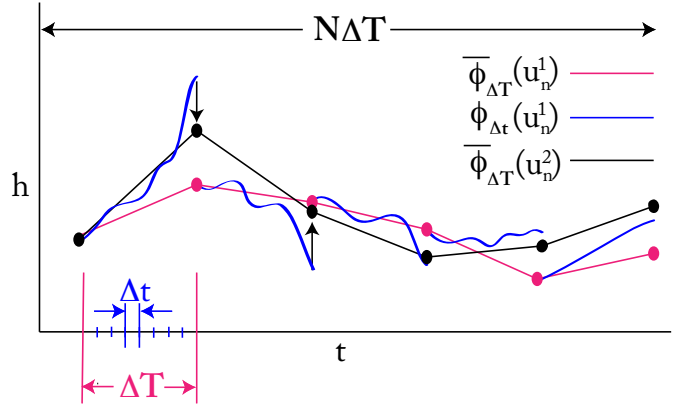}
\end{figure}

Full pseudocode for the asymptotic parareal method is presented below.
In the pseudocode, we use the mid-point rule in the HMM-type scheme
for the slow integrator, and Strang splitting for the fine integrator.
We assume that $\varphi_{\Delta T}\left(\mathbf{u}_{0}\right)$ is
computed using $M$ small time steps $\Delta t$ (so that $\Delta T=M\Delta t$),
and that $\overline{\varphi}_{\Delta T}\left(\mathbf{u}_{0}\right)$
is computed using one big time step $\Delta T$. 

\begin{algorithm}
\caption{Evaluate time average (in parallel)\label{alg:Evaluate-time-average}}

$\text{\ensuremath{\overline{\mathcal{N}}}}\left(\overline{\mathbf{u}}_{0}\right)$:
\begin{lyxcode}
parfor~$j=1,\ldots,\overline{M}-1$:

~~~~$s_{m}=T_{0}m/\overline{M}$

~~~~$\mathbf{u}_{m}\leftarrow\rho\left(s_{m}/T_{0}\right)e^{s_{m}\mathcal{L}}\mathcal{N}\left(e^{-s_{m}\mathcal{L}}\overline{\mathbf{u}}_{0}\right)$

end~parfor

$\mathbf{u}_{1}\leftarrow\text{Sum}\left(\mathbf{u}_{1},\ldots,\mathbf{u}_{\overline{M}}\right)$

\end{lyxcode}
\end{algorithm}

\begin{algorithm}
\caption{asymptotic slow solver}

$\text{Coarse\_Solver(}\mathbf{u}_{0},\Delta T)$:
\begin{lyxcode}
Take~a~$\Delta T/2$~timestep~for~the~linear~dissipative~term:
\[
\mathbf{v}\leftarrow e^{\left(\Delta T/2\right)\mathcal{D}}\mathbf{u}_{0}.
\]
Take~a~$\Delta T$~timestep~for~the~averaged~nonlinear~term:~
\begin{eqnarray*}
\mathbf{v} & \leftarrow & \overline{\mathcal{N}}\left(\mathbf{v}\right),\\
\mathbf{v} & \leftarrow & \overline{\mathcal{N}}\left(\mathbf{u}_{0}+\frac{\Delta T}{2}\mathbf{v}\right).
\end{eqnarray*}

Take~a~$\Delta T/2$~timestep~for~the~linear~dissipative~term:
\[
\mathbf{v}\leftarrow e^{\left(\Delta T/2\right)\mathcal{D}}\mathbf{v}.
\]

Transform~back~to~the~fast~time~coordinate:
\[
\mathbf{u}_{1}\leftarrow e^{\left(\Delta T/\epsilon\right)\mathcal{L}}\mathbf{v}.
\]

~Return~$\mathbf{u}_{1}$\end{lyxcode}
\end{algorithm}

\begin{algorithm}
\caption{Fine solver\label{alg:Fine-solver-Strang}}

$\text{Fine\_Solver(}\mathbf{u}_{0},\Delta t,\Delta T)$:
\begin{lyxcode}
$M=\Delta T/\Delta t$

for~$m=1,\ldots,M$:

~~~~take~$\Delta t/2$~timestep~for~the~linear~term:
\[
\mathbf{v}\leftarrow e^{\left(\Delta t/2\right)\left(\epsilon^{-1}\mathcal{L}+\mathcal{D}\right)}\mathbf{u}_{m}.
\]
~~~~take~a~$\Delta t$~timestep~for~the~nonlinear:~
\begin{eqnarray*}
\mathbf{v} & \leftarrow & \mathcal{N}\left(\mathbf{v}\right),\\
\mathbf{v} & \leftarrow & \mathcal{N}\left(\mathbf{u}_{m}+\frac{\Delta t}{2}\mathbf{v}\right).
\end{eqnarray*}

~~~~take~$\Delta t/2$~timestep~for~the~linear~term:
\[
\mathbf{u}_{m+1}\leftarrow e^{\left(\Delta t/2\right)\left(\epsilon^{-1}\mathcal{L}+\mathcal{D}\right)}\mathbf{v}.
\]

end~for

Return~$\mathbf{u}_{M}$.\end{lyxcode}
\end{algorithm}

\begin{algorithm}
\caption{Parallel-in-time integrator}

\begin{lyxcode}

Compute~the~initial~guess~using~the~slow~solver:

\textrm{$\mathbf{U}_{0}^{\text{old}}\leftarrow\mathbf{u}_{0}$}

for~$n=1,\ldots,N-1$

~~~~$\mathbf{U}_{n}^{\text{old}}\leftarrow\text{Coarse\_Solver(}\mathbf{U}_{n-1}^{\text{old}},\Delta T)$

endfor

Now~refine~the~solution~until~convergence:

\textrm{$\mathbf{U}_{0}^{\text{new}}\leftarrow\mathbf{u}_{0}$}

while~($\max_{n}\left\Vert \mathbf{U}_{n}^{\text{new}}-\mathbf{U}_{n}^{\text{old}}\right\Vert /\left\Vert \mathbf{U}_{n}^{\text{new}}\right\Vert >\text{tol}$)

~~~~parfor~$n=1,\ldots,N-1$:

~~~~~~~~$\mathbf{U}_{n}^{\text{old}}\leftarrow\mathbf{U}_{n}^{\text{new}}$

~~~~~~~~$\mathbf{V}_{n}\leftarrow\text{Fine\_Solver(}\mathbf{U}_{n}^{\text{old}},\Delta t,\Delta T)$

\textrm{~~~~~~~~~~~~~~~~~~~$\mathbf{V}_{n}\leftarrow\mathbf{V}_{n}-\text{Coarse\_Solver(}\mathbf{U}_{n}^{\text{old}},\Delta T)$}

~~~~end~parfor

~~~~for~$n=1,\ldots,N-1$

~~~~~~~~$\mathbf{U}_{n}^{\text{new}}\leftarrow\text{Coarse\_Solver(}\mathbf{U}_{n-1}^{\text{new}},\Delta T)+\mathbf{V}_{n-1}$

~~~~endfor

end~while

return~$\mathbf{U}_{1}^{\text{new}},\ldots,\mathbf{U}_{N}^{\text{new}}$\end{lyxcode}
\end{algorithm}

\section{The parallel-in-time algorithm without scale separation\label{sec:alg with no scale separation}}

In geophysical fluid problems, it is often the case that the time
scale separation can change in space and time, and it is important
that this algorithm works even when there is no scale separation.
We give a heauristic derivation of the time average (\ref{eq:finite time average})
from a different point of view, which indicates that the coarse solution
yields accuracy even when $\epsilon=\mathcal{O}\left(1\right)$, as
long as the time average in (\ref{eq:finite time average}) is performed
over a time scale on which the dynamics of the slow nonlinear terms
are occurring. Figure~\ref{fig: Delta_T vs. epsilon dependence}
schematically depicts how the large time step $\Delta T$ varies as
a function of the scale separation parameter $\epsilon$, for the
asymptotic parareal method, the standard parareal method, and a typical
time-stepping method that is used in serial.

\begin{figure}
\caption{Schematic of $\Delta T$ as a function of $\epsilon$ for (a) the
asymptotic parareal method (solid blue line), (b) the standard parareal
method with a linearly exact coarse solver (dashed red line), and
(c) a typical time-stepping method used in serial\label{fig: Delta_T vs. epsilon dependence}}

\includegraphics[scale=0.6]{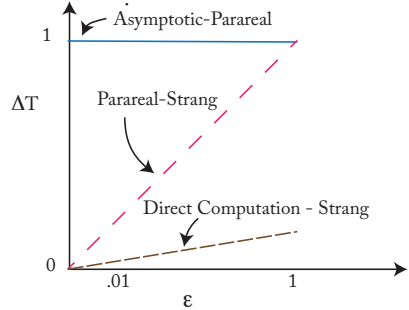}

\end{figure}

As in the integrating factor method, we first factor out the fast
oscillatory part,
\[
\mathbf{u}\left(t\right)=e^{-t/\epsilon\mathcal{L}}\mathbf{v}\left(t\right),
\]
so that $\mathbf{v}\left(t\right)$ satisfies 
\begin{equation}
\frac{\partial\mathbf{v}}{\partial t}=e^{t/\epsilon\mathcal{L}}\mathcal{N}\left(e^{-t/\epsilon\mathcal{L}}\mathbf{v}\left(t\right)\right).\label{eq:IF form, notes}
\end{equation}
Since 
\[
\frac{\partial\mathbf{v}}{\partial t}=\mathcal{O}\left(1\right),
\]
$\mathbf{v}\left(t\right)$ varies more slowly than $\mathbf{u}\left(t\right)$
and thus time steps $\Delta T\gg\epsilon$ can potentially be used
to solve for $\mathbf{v}\left(t\right)$. However, simply using a
standard time-stepping scheme for $\mathbf{v}\left(t\right)$ will
still require small step sizes. In fact, differentiating the equation
(\ref{eq:IF form, notes}) shows that $\mathbf{v}\left(t\right)$
has small but rapid fluctuations, 
\[
\frac{\partial^{2}\mathbf{v}}{\partial t^{2}}=\mathcal{O}\left(\frac{1}{\epsilon}\right),
\]
and standard time-stepping schemes will not be accurate unless $\Delta T$
is small.

\begin{figure}

\caption{Schematic depiction of the moving time average\label{fig:Schematic-depiction-of-the-moving-time-average}}

\includegraphics[scale=0.6]{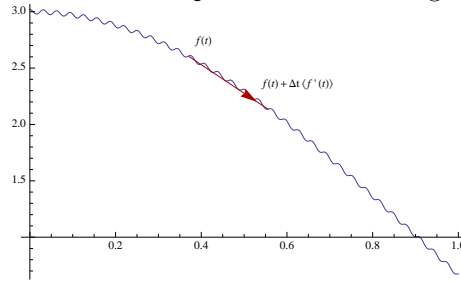}

\end{figure}

The idea (see also \cite{ArielEngquistBjörnKreissTsai:2009} and \cite{Reich1995})
is to take a time step using a smoothed out version of the derivative;
this is accomplished by using a moving time average, so that the small
$\mathcal{O}\left(\epsilon\right)$ fluctuations in the derivative
are removed; see Figure~\ref{fig:Schematic-depiction-of-the-moving-time-average}
for a schematic depiction. In particular, we average the derivative
out over a time scale $\epsilon T_{0}$ on which the slow dynamics
occur (that is, we average over $T_{0}$ fast oscillations). Then
using e.g. forward Euler we get the approximation
\[
\mathbf{v}\left(\Delta T\right)\approx\mathbf{v}\left(0\right)+\Delta T\left\langle \frac{\partial\mathbf{v}}{\partial t}\left(0\right)\right\rangle _{T_{0}},
\]
where
\begin{eqnarray*}
\left\langle \frac{\partial\mathbf{v}}{\partial t}\left(0\right)\right\rangle _{T_{0}} & = & \frac{1}{\epsilon T_{0}}\int_{0}^{\epsilon T_{0}}\rho\left(\frac{s}{\epsilon T_{0}}\right)\frac{\partial\mathbf{v}}{\partial s}\left(s\right)ds\\
 & \approx & \frac{1}{\epsilon T_{0}}\int_{0}^{\epsilon T_{0}}\rho\left(\frac{s}{\epsilon T_{0}}\right)e^{s/\epsilon\mathcal{L}}\mathcal{N}\left(e^{-s/\epsilon\mathcal{L}}\mathbf{v}\left(0\right)\right)ds.
\end{eqnarray*}
This approximation allows error control via two different mechanisms.
First, when $\epsilon\ll1$ the above approximation is also an asymptotic
approximation, and the time average serves to eliminate secular terms
that arise from nonlinear resonances as long as $\Delta T\gg\epsilon$.
However, when $\epsilon=\mathcal{O}\left(1\right)$, then we can take
$\epsilon T_{0}=\Delta T$ , so that the time average is performed
over a scale $\Delta T$ on which the dynamics is slow. In this case,
we see that the above approximation is essentially the forward Euler
method. In fact, in this case the derivatives of $\mathbf{v}\left(s\right)$
are slow on an $\mathcal{O}\left(\Delta T\right)$ time scale, and
we can Taylor expand to get
\begin{eqnarray*}
\frac{1}{\Delta T}\int_{0}^{\Delta T}\rho\left(\frac{s}{\Delta T}\right)\frac{\partial\mathbf{v}}{\partial s}\left(s\right)ds & \approx & \frac{1}{\Delta T}\int_{0}^{\Delta T}\rho\left(\frac{s}{\Delta T}\right)\left(\frac{\partial\mathbf{v}}{\partial s}\left(0\right)+s\frac{\partial^{2}\mathbf{v}}{\partial s^{2}}\left(0\right)+\dots\right)ds\\
 & = & \frac{\partial\mathbf{v}}{\partial s}\left(0\right)+\mathcal{O}\left(\Delta T\right).
\end{eqnarray*}
For systems of ODEs, rigorous error bounds for this ``partial time
averaging'' are derived in Section~$3.2$ of \cite{SandersVerhulst:1985}.

\section{Error and complexity bounds\label{sec:Error-and-complexity}}

We first discuss the complexity of our algorithm. Although this analysis
is standard for the parareal method, the complexity bounds demonstrate
(in theory) arbitrarily large parallel speedup as the parameter $\epsilon$
gets smaller. We assume that the time interval $\left[0,1\right]$
is divided into $N$ sub-intervals $\left[T_{n-1},T_{n}\right]$ of
length $\Delta T=T_{n}-T_{n-1}=1/N$. We also assume that, within
each subinterval, $M$ time steps of size $\Delta t$ are needed for
the fine integrator, so that $M=\Delta T/\Delta t$. We let $\tau_{\text{c}}$
denote the (wall-clock) time of computing the coarse solution, $\overline{\varphi}_{\Delta T}\left(\mathbf{u}_{0}\right)$,
over a large time step $\Delta T$. Similarly, we let $\tau_{\text{f}}$
denote the (wall-clock) time of computing the fine solution, $\varphi_{\Delta t}\left(\mathbf{u}_{0}\right)$,
over a small time step $\Delta t$.

To obtain an initial guess for the parareal method, we first compute
the slow approximations $\mathbf{U}_{n}^{0}=\overline{\varphi}_{\Delta T}\left(\mathbf{U}_{n-1}^{0}\right)$,
$n=1,\ldots,N$, which takes a wall-clock time of $\tau_{c}N$. Next,
suppose that we are at a given iteration level $k$, and we need to
compute the next iterations $\mathbf{U}_{n}^{k+1}$ from $\mathbf{U}_{n}^{k}$.
To do so, we first need to compute, in parallel, the difference $\mathbf{V}_{n}^{k}=\varphi_{\Delta T}\left(\mathbf{U}_{n}^{k}\right)-\overline{\varphi}_{\Delta T}\left(\mathbf{U}_{n}^{k}\right)$
between the coarse and fine solutions. This takes a wall-clock time
of $\tau_{\text{c}}+\tau_{\text{f}}M$. We then need to compute, in
serial, the updated approximations $\mathbf{U}_{n}^{k+1}=\overline{\varphi}_{\Delta T_{n-1}}\left(\mathbf{U}_{n-1}^{k+1}\right)+\mathbf{V}_{n-1}^{k},\,\,\,\, n=1,\ldots,N-1$.
This takes a wall-clock time of $\tau_{\text{c}}N$, for an overall
cost of $\tau_{\text{f}}M+\tau_{\text{c}}N+\tau_{\text{c}}$ per iteration.
Thus, after $\nu$ iterations, the overall cost of the parareal method
is $\nu\left(\tau_{\text{f}}M+\tau_{\text{c}}N+\tau_{\text{c}}\right)+\tau_{\text{c}}N$.
In contrast, directly solving (\ref{eq:full eqn}) in serial requires
time $\tau_{\text{f}}NM$. Thus, the estimated parallel speedup is
\begin{equation}
\frac{\tau_{\text{f}}NM}{\nu\left(\tau_{\text{f}}M+\tau_{\text{c}}N+\tau_{\text{c}}\right)+\tau_{\text{c}}N}\leq\min\left\{ \frac{\tau_{\text{f}}}{\tau_{\text{c}}}\frac{M}{\left(\nu+1\right)},\frac{N}{\nu}\right\} .\label{eq:speedup, upper bound}
\end{equation}
Notice that an upper bound on the speedup is proportional to $M=\Delta T/\Delta t$,
the ratio of slow to fine time step sizes.

In order for the asymptotic parareal method to converge, the fine
solver needs time steps $\Delta t$ that are some fraction $a$ of
$\epsilon$, $\Delta t=a\epsilon$, where $0<a<1$. Therefore, $\Delta T=M\Delta t=aM\epsilon$,
and $NM=\left(N\Delta T/\Delta t\right)=1/\left(a\epsilon\right)$.
Now, since
\[
\frac{\tau_{\text{f}}NM}{\nu\left(\tau_{\text{f}}M+\tau_{\text{c}}N\right)+\tau_{\text{c}}N}\approx\frac{1}{\tau_{\text{c}}\nu}\frac{\left(\tau_{\text{c}}N\right)\left(\tau_{\text{f}}M\right)}{\tau_{\text{f}}M+\tau_{\text{c}}N},
\]
this suggests taking $\tau_{\text{c}}N=\tau_{\text{f}}M$ (this choice
also balances the two terms in the upper bound (\ref{eq:speedup, upper bound})).
Therefore,
\[
N=\sqrt{\frac{\tau_{\text{f}}}{\tau_{\text{c}}}}\sqrt{\frac{1}{a\epsilon}},\,\,\,\, M=\sqrt{\frac{\tau_{\text{c}}}{\tau_{f}}}\sqrt{\frac{1}{a\epsilon}},
\]
and the estimated parallel speedup is given by
\begin{eqnarray*}
\frac{\tau_{\text{f}}NM}{\nu\left(\tau_{\text{f}}M+\tau_{\text{c}}N\right)+\tau_{\text{c}}N} & = & \frac{N\left(\frac{\tau_{\text{c}}}{\tau_{\text{f}}}N\right)}{\nu\left(\frac{\tau_{\text{c}}}{\tau_{\text{f}}}N+\frac{\tau_{\text{c}}}{\tau_{\text{f}}}N\right)+\frac{\tau_{\text{c}}}{\tau_{\text{f}}}N}\\
 & = & \frac{1}{2\nu+1}\sqrt{\frac{\tau_{\text{f}}}{\tau_{\text{c}}}}\sqrt{\frac{1}{a\epsilon}},
\end{eqnarray*}
which results in arbitrarily speedup relative to standard numerical
integrators.  %

Recall that computing a time step for the coarse solver, $\overline{\varphi}_{\Delta T}\left(\mathbf{u}_{0}\right)$,
requires evaluating the time average (\ref{eq:numerical average, discretized}). Since this can
be performed in an embarassingly parallel manner, the (wall-clock) time $\tau_{\text{c}}$ 
satisfies $\tau_{\text{c}} \sim \log_2 \left(  \overline{M} \right) $, where $\overline{M}$ is the number  of terms taken in the
time average (\ref{eq:numerical average, discretized}) (see Algorithm~\ref{alg:Evaluate-time-average}).
Also, as discussed in Section~\ref{sec:An-asymptotic-slow}, $\overline{M} \sim \epsilon^{-r} $, where $0<r<1$, and so the
estimated parallel speedup scales
like $ \left( \epsilon  \log_2 \left(  \epsilon^{-1} \right) \right)^{-1/2}  $. In particular, this 
results in arbitrarily large efficiency gains relative to standard numerical
integrators as $\epsilon \rightarrow 0$.  Note that, since $r<1$, arbitrary speedup is also possible without evaluating  
the time average (\ref{eq:numerical average, discretized})
in parallel; however, we have found that this may be necessary for obtaining satisfactory speedup when
$\epsilon$ is only moderately small. We remark that evaluating the average in parallel requires more processors,
and therefore decrease the parallel efficiency relative to standard parareal (in the experiments in Section~\ref{sec:Numerical-examples},
evaluating the average in parallel requires about a factor of $1.3$ or less extra processors).

We now use the theory developed in \cite{Bal2005} (see also \cite{Gander})
for our error bounds. As in \cite{Bal2005}, we consider a scale of
Banach spaces $B_{0}\supseteq B_{1}\supseteq B_{2}\supseteq\cdots$,
where $B_{j}$ typically quantifies the degree of regularity (i.e.,
functions in $B_{j+1}$ are more smooth than functions in $B_{j}$).
This consideration is useful when obtaining error bounds for infinite-dimensional
systems. In fact, as discussed in \cite{Bal2005}, sufficient regularity
constraints can be needed on the initial condition in order to ensure
convergence of the parareal method (especially in the absence of numerical
or analytical dissipation, and on long time intervals). Also, rigorous
bounds between the solution to (\ref{eq:full eqn}), and its asymptotic
approximation obtained from (\ref{eq:averaged equation}), can require
controlling ``small denominators'' that can arise from near resonances;
this can often be achieved if the intial condition is sufficiently
smooth. For example, for the so-called primitive equations---which
are fundamental in geophysical fluid dynamics---rigorous error bounds
for the asymptotic approximation (\ref{eq:averaged equation}) have
been derived in \cite{PetcuTemamWirosoetisno:2005}. These bounds
demonstrate that the error in the asymptotic approximation is small,
as long as the initial condition is sufficiently smooth. 

In order to account for the error that arises from solving the slow
evolution equation (\ref{eq:averaged equation}) numerically with
a time-stepping algorithm, we first need to introduce some more notation.
In particular, let $\widetilde{\varphi}_{\Delta T}\left(\mathbf{u}_{0}\right)$
denote the evolution operator associated with solving the slow equation
(\ref{eq:averaged equation}) using an order $p$ time-stepping method,
so that $\widetilde{\varphi}_{\Delta T}\left(\mathbf{u}_{0}\right)$
is a numerical approximation to $\overline{\varphi}_{\Delta T}\left(\mathbf{u}_{0}\right)$.
We derive error bounds for the iteration
\begin{equation}
\mathbf{U}_{n}^{k}=\widetilde{\varphi}_{\Delta T}\left(\mathbf{U}_{n-1}^{k}\right)+\left(\varphi_{\Delta T}\left(\mathbf{U}_{n-1}^{k-1}\right)-\widetilde{\varphi}_{\Delta T}\left(\mathbf{U}_{n-1}^{k-1}\right)\right).\label{eq:parareal iteration for proof}
\end{equation}
Notice that, in contrast to the iteration (\ref{eq:HMM parareal iteration})
presented in Section~\ref{sec:Parareal-Method}, we explicitly include
the numerical approximation $\widetilde{\varphi}_{\Delta T}\left(\mathbf{U}_{n-1}^{k-1}\right)$
to $\overline{\varphi}_{\Delta T}\left(\mathbf{U}_{n-1}^{k-1}\right)$
in the analysis. It is also possible to include the error arising
from computing the fine solution $\varphi_{\Delta T}\left(\mathbf{U}_{n-1}^{k-1}\right)$
with a time-stepping scheme, but this requires no additional techniques
and is omitted for simplicity.

Define the operators $\mathcal{E}_{\varphi,\overline{\varphi}}\left(\cdot\right)$
and $\mathcal{E}_{\overline{\varphi},\widetilde{\varphi}}\left(\cdot\right)$,
\begin{equation}
\mathcal{E}_{\varphi,\overline{\varphi}}\left(\mathbf{u}_{0}\right)=\varphi_{\Delta T}\left(\mathbf{u}_{0}\right)-\overline{\varphi}_{\Delta T}\left(\mathbf{u}_{0}\right),\,\,\,\,\,\mathcal{E}_{\overline{\varphi},\widetilde{\varphi}}\left(\mathbf{u}_{1}\right)=\overline{\varphi}_{\Delta T}\left(\mathbf{u}_{0}\right)-\widetilde{\varphi}_{\Delta T}\left(\mathbf{u}_{0}\right).\label{eq:def of difference operators, proof}
\end{equation}
Then following \cite{Bal2005}, we make the following assumptions
on $\varphi_{\Delta T}\left(\cdot\right)$, $\overline{\varphi}_{\Delta T}\left(\cdot\right)$,
$\widetilde{\varphi}_{\Delta T}\left(\cdot\right)$, $\mathcal{E}_{\varphi,\overline{\varphi}}\left(\cdot\right)$,
and $\mathcal{E}_{\overline{\varphi},\widetilde{\varphi}}\left(\cdot\right)$:
\begin{enumerate}
\item The operators $\varphi_{t}\left(\cdot\right)$ and $\overline{\varphi}_{t}\left(\cdot\right)$
are uniformly bounded for $0\leq t\leq1$,
\begin{equation}
\left\Vert \varphi_{t}\left(\mathbf{u}_{0}\right)\right\Vert _{B_{j}}\leq C_{j}\left\Vert \mathbf{u}_{0}\right\Vert _{B_{j+1}},\,\,\,\,\left\Vert \overline{\varphi}_{t}\left(\mathbf{u}_{0}\right)\right\Vert _{B_{j}}\leq C_{j}\left\Vert \mathbf{u}_{0}\right\Vert _{B_{j+1}}.\label{eq:stability, proof}
\end{equation}

\item The asymptotic approximation is accurate in the sense that 
\begin{equation}
\left\Vert \varphi_{t}\left(\mathbf{u}_{0}\right)-\overline{\varphi}_{t}\left(\mathbf{u}_{0}\right)\right\Vert _{B_{j}}\leq\epsilon C_{j}\left\Vert \mathbf{u}_{0}\right\Vert _{B_{j+1}},\,\,\,\,\,0\leq t\leq1.\label{eq:asymptotic approximation error, proof}
\end{equation}

\item The operator $\overline{\varphi}_{\Delta T}\left(\cdot\right)$ satisfies
\begin{equation}
\left\Vert \overline{\varphi}_{\Delta T}\left(\mathbf{u}_{1}\right)-\overline{\varphi}_{\Delta T}\left(\mathbf{u}_{2}\right)\right\Vert _{B_{j}}\leq\left(1+C_{j}\Delta T\right)\left\Vert \mathbf{u}_{1}-\mathbf{u}_{2}\right\Vert _{B_{j}},\label{eq:condition for proof, coarse solver}
\end{equation}
and its numerical approximation $\widetilde{\varphi}_{\Delta T}\left(\cdot\right)$
satisfies 
\begin{equation}
\left\Vert \widetilde{\varphi}_{\Delta T}\left(\mathbf{u}_{1}\right)-\widetilde{\varphi}_{\Delta T}\left(\mathbf{u}_{2}\right)\right\Vert _{B_{j}}\leq\left(1+C_{j}\Delta T\right)\left\Vert \mathbf{u}_{1}-\mathbf{u}_{2}\right\Vert _{B_{j}}.\label{eq:conditions for proof, coarse solver, 2}
\end{equation}

\item The operators $\mathcal{E}_{\varphi,\overline{\varphi}}\left(\cdot\right)$,
and $\mathcal{E}_{\overline{\varphi},\widetilde{\varphi}}\left(\cdot\right)$
satisfy 
\begin{equation}
\left\Vert \mathcal{E}_{\varphi,\overline{\varphi}}\left(\mathbf{u}_{1}\right)-\mathcal{E}_{\varphi,\overline{\varphi}}\left(\mathbf{u}_{2}\right)\right\Vert _{B_{j}}\leq\epsilon C_{j}\left\Vert \mathbf{u}_{1}-\mathbf{u}_{2}\right\Vert _{B_{j+1}},\label{eq:condition for proof, diff of coarse and fine solver}
\end{equation}
and
\begin{equation}
\left\Vert \mathcal{E}_{\overline{\varphi},\widetilde{\varphi}}\left(\mathbf{u}_{1}\right)-\mathcal{E}_{\overline{\varphi},\widetilde{\varphi}}\left(\mathbf{u}_{2}\right)\right\Vert \leq C_{j}\Delta T^{p+1}\left\Vert \mathbf{u}_{1}-\mathbf{u}_{2}\right\Vert _{B_{j+1}}.\label{eq:condition for proof, diff of coarse and fine solver, 2}
\end{equation}
for some $p\geq1$.
\end{enumerate}
Recalling that $T_{n}=n\Delta T$, a slight modification of the proof
of Theorem~$1$ in \cite{Bal2005} immediately yields the convergence
result stated below. In particular, taking $\Delta T=\epsilon^{1/2}$,
the error scales asymptotically like $\epsilon^{k+1/2}$ after the
$k$th iteration (the choice of scaling $\Delta T\sim\epsilon^{1/2}$
yields a speedup that scales like $\epsilon^{-1/2}$). 
\begin{thm}
\label{thm:convergence}Assuming that $\mathbf{u}_{0}=\mathbf{u}\left(T_{0}\right)\in B_{j+k+1}$,
the error, $\mathbf{u}\left(T_{n}\right)-\mathbf{U}_{n}^{k}$, after
the $k$th parareal iteration is bounded by 
\[
\left\Vert \mathbf{u}\left(T_{n}\right)-\mathbf{U}_{n}^{k}\right\Vert _{B_{j}}\leq C_{k,j}\left(\Delta T^{p}+\epsilon\right)\left(\Delta T^{p}+\frac{\epsilon}{\Delta T}\right)^{k}\left\Vert \mathbf{u}_{0}\right\Vert _{B_{k+j+1}},
\]
where $C_{k,j}$ is a constant that depends only on the constants
$C_{m}$, $m=0,1,\ldots,k+j$.\end{thm}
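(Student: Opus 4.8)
The plan is to follow the proof of Theorem~1 in \cite{Bal2005} essentially verbatim, the only new feature being that the one-step discrepancy between the fine propagator $\varphi_{\Delta T}$ and the \emph{numerical} coarse propagator $\widetilde\varphi_{\Delta T}$ now splits as $\varphi_{\Delta T}-\widetilde\varphi_{\Delta T}=\mathcal E_{\varphi,\overline\varphi}+\mathcal E_{\overline\varphi,\widetilde\varphi}$, i.e.\ it carries an asymptotic-modelling part of size $\mathcal O(\epsilon)$ together with a time-discretization part of size $\mathcal O(\Delta T^{p+1})$; by (\ref{eq:condition for proof, diff of coarse and fine solver})--(\ref{eq:condition for proof, diff of coarse and fine solver, 2}) the combined operator has Lipschitz modulus $C_j(\epsilon+\Delta T^{p+1})$ as a map $B_{j+1}\to B_j$. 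Setting $e_n^k:=\mathbf{u}(T_n)-\mathbf{U}_n^k$ (so $e_0^k=0$ for every $k$, since $T_0=0$ and $\mathbf{U}_0^k=\mathbf{u}_0$), subtracting the iteration (\ref{eq:parareal iteration for proof}) from the exact identity $\mathbf{u}(T_n)=\widetilde\varphi_{\Delta T}(\mathbf{u}(T_{n-1}))+\bigl(\varphi_{\Delta T}-\widetilde\varphi_{\Delta T}\bigr)(\mathbf{u}(T_{n-1}))$, and using (\ref{eq:conditions for proof, coarse solver, 2}) on the $\widetilde\varphi$ terms and the combined Lipschitz bound on the remaining terms, yields the two-parameter recursion
\[
\|e_n^k\|_{B_j}\le(1+C_j\Delta T)\,\|e_{n-1}^k\|_{B_j}+C_j(\epsilon+\Delta T^{p+1})\,\|e_{n-1}^{k-1}\|_{B_{j+1}}.
\]
Each application of the second term costs one degree of regularity, which is exactly what forces the hypothesis $\mathbf{u}_0\in B_{j+k+1}$.

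For the base case $k=0$, $\mathbf{U}_n^0$ is the $n$-fold iterate of $\widetilde\varphi_{\Delta T}$ starting from $\mathbf{u}_0$. Splitting $\mathbf{u}(T_n)-\mathbf{U}_n^0=\bigl(\mathbf{u}(T_n)-\overline\varphi_{T_n}(\mathbf{u}_0)\bigr)+\bigl(\overline\varphi_{T_n}(\mathbf{u}_0)-\mathbf{U}_n^0\bigr)$, the first difference is $\mathcal O(\epsilon)\,\|\mathbf{u}_0\|_{B_{j+1}}$ by (\ref{eq:asymptotic approximation error, proof}); here one uses that $\overline\varphi_t$ is a genuine flow --- true because $\mathcal D$ and $\overline{\mathcal N}$ commute with $e^{-(t/\epsilon)\mathcal L}$ (for $\overline{\mathcal N}$ this is the observation that each $\lambda_n=0$ component of the nonlinearity commutes with $e^{s\mathcal L}$) --- so this bound does not accumulate over the $N=1/\Delta T$ coarse steps. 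The second difference is $\mathcal O(\Delta T^{p})\,\|\mathbf{u}_0\|_{B_{j+1}}$ by the usual telescoping of the $\mathcal O(\Delta T^{p+1})$ local error over $N=1/\Delta T$ steps, with (\ref{eq:condition for proof, coarse solver}), (\ref{eq:conditions for proof, coarse solver, 2}) (second argument set to $\mathbf{0}$) controlling the intermediate iterates and a discrete Gr\"onwall inequality (using $(1+C_j\Delta T)^N\le e^{C_j}$, valid since $N\Delta T=1$) controlling the accumulation. Hence $\|e_n^0\|_{B_j}\le C_{0,j}(\Delta T^p+\epsilon)\,\|\mathbf{u}_0\|_{B_{j+1}}$, the claimed bound at $k=0$.

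The induction on $k$ then closes mechanically. Assuming the bound at level $k-1$ with index $j+1$ --- so $\sup_m\|e_m^{k-1}\|_{B_{j+1}}\le C_{k-1,j+1}(\Delta T^p+\epsilon)(\Delta T^p+\epsilon/\Delta T)^{k-1}\|\mathbf{u}_0\|_{B_{k+j+1}}$, noting $(k-1)+(j+1)+1=k+j+1$ --- substitute into the recursion. The resulting scalar recursion $x_n\le(1+C_j\Delta T)x_{n-1}+b$ with $x_0=0$ has solution $x_n\le b\bigl((1+C_j\Delta T)^n-1\bigr)/(C_j\Delta T)\le e^{C_j}b/(C_j\Delta T)$ because $n\le N$ and $N\Delta T=1$. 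Since $\tfrac1{\Delta T}(\epsilon+\Delta T^{p+1})=\Delta T^p+\epsilon/\Delta T$, the factor $b/\Delta T$ supplies precisely one more power of $(\Delta T^p+\epsilon/\Delta T)$, giving
\[
\|e_n^k\|_{B_j}\le C_{k,j}(\Delta T^p+\epsilon)(\Delta T^p+\epsilon/\Delta T)^k\,\|\mathbf{u}_0\|_{B_{k+j+1}},
\]
with $C_{k,j}$ assembled from $C_0,\dots,C_{k+j}$ only.

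I expect no genuine difficulty here. The two things needing care are the regularity bookkeeping --- checking at each step which Banach index is consumed, and that $C_{k,j}$ depends on only finitely many of the $C_m$ --- and the two small preliminary observations that $\overline\varphi_t$ is a flow (so the $\mathcal O(\epsilon)$ asymptotic error in the $k=0$ step is not amplified by the number of coarse steps) and that (\ref{eq:condition for proof, diff of coarse and fine solver})--(\ref{eq:condition for proof, diff of coarse and fine solver, 2}) may be read as pointwise bounds by taking the second argument to be $\mathbf{0}$, which is what the $k=0$ step uses.
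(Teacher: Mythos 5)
Your proposal is correct and follows essentially the same route as the paper's own proof: the same decomposition of the one-step error into $\mathcal{E}_{\varphi,\overline{\varphi}}+\mathcal{E}_{\overline{\varphi},\widetilde{\varphi}}$, the same two-parameter recursion $\left\Vert e_{n}^{k}\right\Vert _{B_{j}}\leq\left(1+C_{j}\Delta T\right)\left\Vert e_{n-1}^{k}\right\Vert _{B_{j}}+C_{j}\left(\epsilon+\Delta T^{p+1}\right)\left\Vert e_{n-1}^{k-1}\right\Vert _{B_{j+1}}$, and the same induction-plus-discrete-Gr\"onwall closure, with the regularity index shifting by one per iteration. The only difference is that you spell out the $k=0$ base case (flow property of $\overline{\varphi}_{t}$ and telescoping of the local $\mathcal{O}\left(\Delta T^{p+1}\right)$ error), which the paper dispatches by citing assumption (\ref{eq:asymptotic approximation error, proof}) and ``a classical result''; your bookkeeping also matches the stated bound slightly more cleanly than the paper's own displayed constants.
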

\begin{proof}
The proof is by induction on $k$. When $k=0$,
\begin{eqnarray*}
\left\Vert \mathbf{u}\left(T_{n}\right)-\mathbf{U}_{n}^{0}\right\Vert _{B_{j}} & = & \left\Vert \varphi_{n\Delta T}\left(\mathbf{u}_{0}\right)-\widetilde{\varphi}_{n\Delta T}\left(\mathbf{u}_{0}\right)\right\Vert _{B_{j}}\\
 & \leq & \left\Vert \varphi_{n\Delta T}\left(\mathbf{u}_{0}\right)-\overline{\varphi}_{n\Delta T}\left(\mathbf{u}_{0}\right)\right\Vert _{B_{j}}+\left\Vert \overline{\varphi}_{n\Delta T}\left(\mathbf{u}_{0}\right)-\widetilde{\varphi}_{n\Delta T}\left(\mathbf{u}_{0}\right)\right\Vert _{B_{j}}\\
 & \leq & B_{0,j}\left(\epsilon+\Delta T^{p}\right)\left\Vert \mathbf{u}_{0}\right\Vert _{B_{j+1}},
\end{eqnarray*}
where, in the last inequality, we used (\ref{eq:asymptotic approximation error, proof})
to bound the first term and a classical result (see e.g. \cite{Bal2005}
for more discussion) to bound the second term.

Now assume that
\[
\left\Vert \mathbf{u}\left(T_{n}\right)-\mathbf{U}_{n}^{k-1}\right\Vert _{B_{j}}\leq C_{k-1,j}\left(\Delta T+\epsilon\right)\left(\Delta T^{p}+\frac{\epsilon}{\Delta T}\right)^{k-1}\left\Vert \mathbf{u}_{0}\right\Vert _{B_{j+k}}.
\]
holds. Using (\ref{eq:parareal iteration for proof}), $\mathbf{u}\left(T_{n}\right)=\varphi_{\Delta T}\left(\mathbf{u}\left(T_{n-1}\right)\right)$,
and the definitions of $\mathcal{E}_{\varphi,\overline{\varphi}}\left(\cdot\right)$
and $\mathcal{E}_{\overline{\varphi},\widetilde{\varphi}}\left(\cdot\right)$,
we rewrite the difference $\mathbf{u}\left(T_{n}\right)-\mathbf{U}_{n}^{k}$
in the form
\begin{eqnarray*}
\mathbf{u}\left(T_{n}\right)-\mathbf{U}_{n}^{k} & = & \left(\widetilde{\varphi}_{\Delta T}\left(\mathbf{u}\left(T_{n-1}\right)\right)-\widetilde{\varphi}_{\Delta T}\left(\mathbf{U}_{n-1}^{k}\right)\right)+\\
 &  & \,\,\,\left(\mathcal{E}_{\varphi,\overline{\varphi}}\left(\mathbf{u}\left(T_{n-1}\right)\right)-\mathcal{E}_{\varphi,\overline{\varphi}}\left(\mathbf{U}_{n-1}^{k-1}\right)\right)+\left(\mathcal{E}_{\overline{\varphi},\widetilde{\varphi}}\left(\mathbf{u}\left(T_{n-1}\right)\right)-\mathcal{E}_{\overline{\varphi},\widetilde{\varphi}}\left(\mathbf{U}_{n-1}^{k-1}\right)\right).
\end{eqnarray*}
Using (\ref{eq:conditions for proof, coarse solver, 2}) for the first
term and (\ref{eq:condition for proof, diff of coarse and fine solver})-(\ref{eq:condition for proof, diff of coarse and fine solver, 2})
for the second and third terms, we obtain that
\begin{eqnarray*}
\left\Vert \mathbf{u}\left(T_{n}\right)-\mathbf{U}_{n}^{k}\right\Vert _{B_{j}} & \leq & \left(1+C_{j}\Delta T\right)\left\Vert \mathbf{u}\left(T_{n-1}\right)-\mathbf{U}_{n-1}^{k}\right\Vert _{B_{j}}+\\
 &  & \,\,\,\,\,\,\,\, C_{j}\left(\Delta T^{p+1}+\epsilon\right)\left\Vert \mathbf{u}\left(T_{n-1}\right)-\mathbf{U}_{n-1}^{k-1}\right\Vert _{B_{j+1}}\\
 & \leq & \left(1+C_{j}\Delta T\right)\left\Vert \mathbf{u}\left(T_{n-1}\right)-\mathbf{U}_{n-1}^{k}\right\Vert _{B_{j}}+\\
 &  & \,\,\,\,\,\,\,\Delta TC_{j}C_{k-1,j+1}\left(\Delta T^{p}+\frac{\epsilon}{\Delta T}\right)^{k+1}\left\Vert \mathbf{u}_{0}\right\Vert _{B_{j+k+1}},
\end{eqnarray*}
where we used the induction hypothesis in the previous step. Finally,
applying the discrete Gronwall inequality, we obtain that
\begin{eqnarray*}
\left\Vert \mathbf{u}\left(T_{n}\right)-\mathbf{U}_{n}^{k}\right\Vert _{B_{j}} & \leq & \left(e^{C_{j}\left(T_{n}-T_{0}\right)}-1\right)C_{k-1,j+1}\left(\Delta T^{p}+\frac{\epsilon}{\Delta T}\right)^{k+1}\left\Vert \mathbf{u}_{0}\right\Vert _{B_{j+k+1}}\\
 & \leq & C_{k,j}\left(\Delta T^{p}+\frac{\epsilon}{\Delta T}\right)^{k+1}\left\Vert \mathbf{u}_{0}\right\Vert _{B_{j+k+1}},
\end{eqnarray*}
where
\[
C_{k,j}=\left(e^{C_{j}}-1\right)C_{k-1,j+1}.
\]

\end{proof}
~~~

The constant $C_{k,j}$ in Theorem~\ref{thm:convergence} can potentially
grow with increasing $k$, and therefore convergence is to be understood
in an asymptotic sense (that is, fixed $k$ and decresing $\epsilon$). 

A straightforward modification of the proof in \cite{Gander} yields
a similar bound as in Theorem~\ref{thm:convergence}, but with a
constant that decreases with increasing $k$, and, in fact, yields
superlinear convergence. In particular, this result demonstrates convergence
for fixed $\epsilon$, as $k$ increases. Although the error bound
is therefore more powerful, its derivation also requires more stringent
assumptions (in particular, these assumptions may not hold in the
infinite-dimensional setting).

To state this result, we suppose that the Banach spaces $B_{j}$ coincide,
$B_{0}=B_{1}=\ldots$, and that bounds of the form (\ref{eq:stability, proof})-(\ref{eq:condition for proof, diff of coarse and fine solver, 2})
hold for a fixed constant $C=C_{j}$, $j=0,1,\ldots$ Then we have
the following error bounds. We also assume that the number $N$ of
large time steps $\Delta T$ taken satisfies $N=\epsilon^{-1/2}$,
and therefore $\Delta T=\epsilon^{1/2}$ (based on the above complexity
analysis, this yields, in principle, optimal parallel speedup).
\begin{thm}
\label{thm:convergence-1}The error, $\mathbf{u}\left(T_{n}\right)-\mathbf{U}_{n}^{k}$,
after the $k$th parareal iteration is bounded by 
\[
\left\Vert \mathbf{u}\left(T_{n}\right)-\mathbf{U}_{n}^{k}\right\Vert _{B_{0}}\leq\epsilon^{k/2}\left(\epsilon+\epsilon^{p/2}\right)\frac{\left(2C\right)^{k}}{\left(k+1\right)!}e^{C\left(T_{n}-T_{k+1}\right)}\left(e^{C\left(T_{n}-T_{0}\right)}-1\right).
\]
\end{thm}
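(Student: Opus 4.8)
The plan is to carry over the calculation from the proof of Theorem~\ref{thm:convergence} almost verbatim---now using that the Banach scale is collapsed, $B_{0}=B_{1}=\cdots$, and that all constants in (\ref{eq:stability, proof})--(\ref{eq:condition for proof, diff of coarse and fine solver, 2}) equal a single $C$---and then to solve the resulting error recursion more carefully, as in \cite{Gander}. Write $e_{n}^{k}=\|\mathbf{u}(T_{n})-\mathbf{U}_{n}^{k}\|_{B_{0}}$. Starting from the iteration (\ref{eq:parareal iteration for proof}) and $\mathbf{u}(T_{n})=\varphi_{\Delta T}(\mathbf{u}(T_{n-1}))$, the same decomposition of $\mathbf{u}(T_{n})-\mathbf{U}_{n}^{k}$ through $\mathcal{E}_{\varphi,\overline{\varphi}}$ and $\mathcal{E}_{\overline{\varphi},\widetilde{\varphi}}$ used in Theorem~\ref{thm:convergence}, together with (\ref{eq:conditions for proof, coarse solver, 2}) for the $\widetilde{\varphi}_{\Delta T}$ difference and (\ref{eq:condition for proof, diff of coarse and fine solver})--(\ref{eq:condition for proof, diff of coarse and fine solver, 2}) for the remaining two terms, yields the scalar recursion
\[
e_{n}^{k}\leq(1+C\Delta T)\,e_{n-1}^{k}+C(\epsilon+\Delta T^{p+1})\,e_{n-1}^{k-1},\qquad e_{0}^{k}=0.
\]
For $k=0$, using $\mathbf{U}_{n}^{0}=\widetilde{\varphi}_{n\Delta T}(\mathbf{u}_{0})$, (\ref{eq:asymptotic approximation error, proof}), and the classical global error bound for an order-$p$ method, one gets the uniform estimate $e_{n}^{0}\leq C(\epsilon+\Delta T^{p})=C(\epsilon+\epsilon^{p/2})$ (here and below I absorb $\|\mathbf{u}_{0}\|_{B_{0}}$ into the constants).

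The heart of the argument is to solve this two-index recursion by induction on $k$. Unrolling in $n$ (discrete variation of parameters),
\[
e_{n}^{k}\leq C(\epsilon+\Delta T^{p+1})\sum_{m=0}^{n-1}(1+C\Delta T)^{\,n-1-m}\,e_{m}^{k-1}.
\]
For $k=1$, the uniform bound on $e_{m}^{0}$ together with the geometric sum $\sum_{m}(1+C\Delta T)^{m}=((1+C\Delta T)^{n}-1)/(C\Delta T)\leq(e^{C(T_{n}-T_{0})}-1)/(C\Delta T)$ already produces the factor $e^{C(T_{n}-T_{0})}-1$ and one power of $C(\epsilon+\Delta T^{p+1})/(C\Delta T)=\epsilon^{1/2}+\epsilon^{p/2}$. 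For $k\geq2$, one carries an inductive hypothesis for $e_{m}^{k-1}$ that is a constant times $\binom{m}{k-1}$ times $(1+C\Delta T)^{\,m-k}$; the hockey-stick identity $\sum_{m}\binom{m}{k-1}=\binom{n}{k}$ raises the binomial index by one, the remaining weights $(1+C\Delta T)^{\,n-1-k}$ are bounded by $e^{C(T_{n}-T_{k+1})}$, and one more factor $C(\epsilon+\Delta T^{p+1})$ appears. Using $\binom{n}{k}\leq n^{k}/k!$ and the balanced scaling $\Delta T=\epsilon^{1/2}$, $N=\epsilon^{-1/2}$, so that $C(\epsilon+\Delta T^{p+1})\,n\leq C(\epsilon^{1/2}+\epsilon^{p/2})\leq 2C\epsilon^{1/2}$ (the last step using $p\geq1$), one assembles precisely the factors $\epsilon^{k/2}(\epsilon+\epsilon^{p/2})(2C)^{k}/(k+1)!$ together with $e^{C(T_{n}-T_{k+1})}(e^{C(T_{n}-T_{0})}-1)$.

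I expect the main obstacle to be the bookkeeping, not the ideas: one must fix the exact form of the inductive hypothesis---which shift on the binomial coefficient, which exponent on $1+C\Delta T$, where exactly the single factor $e^{C(T_{n}-T_{0})}-1$ is carried---so that the hockey-stick sums telescope cleanly and one lands on $(k+1)!$ in the denominator and on $e^{C(T_{n}-T_{k+1})}(e^{C(T_{n}-T_{0})}-1)$ rather than on cruder versions. The only genuinely quantitative step is the estimate $C(\epsilon+\Delta T^{p+1})\,n\leq 2C\epsilon^{1/2}$: it is this---made available precisely by the balanced choice $N=\Delta T^{-1}=\epsilon^{-1/2}$---that turns the $\mathcal{O}(\epsilon^{1/2})$ per-iteration contraction into the superlinear $\epsilon^{k/2}$ decay. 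Finally, because the collapsed Banach scale removes the loss of one derivative per iteration that forced the constant $C_{k,j}$ in Theorem~\ref{thm:convergence} to grow with $k$, the factorial $1/(k+1)!$ now dominates the geometric growth $(2C)^{k}$, so the bound genuinely tends to zero as $k\to\infty$ at fixed $\epsilon$; the price, as the remark after the statement notes, is the stronger hypotheses (uniform $C$, $B_{0}=B_{1}=\cdots$), which need not hold in the infinite-dimensional setting.
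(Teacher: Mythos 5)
Your proposal is correct and follows the paper's proof essentially verbatim: the same uniform-constant recursion $e_{n}^{k}\leq(1+C\Delta T)\,e_{n-1}^{k}+C(\epsilon+\Delta T^{p+1})\,e_{n-1}^{k-1}$ on the collapsed Banach scale, the same $k=0$ bound with the factor $e^{C(T_{n}-T_{0})}-1$, and the same closing estimates $\Delta T^{p+1}\leq\epsilon$ and $n\epsilon\leq N\epsilon=\epsilon^{1/2}$ under the balanced scaling $\Delta T=\epsilon^{1/2}$, $N=\epsilon^{-1/2}$. The only difference is that where you re-derive the solution of the two-index recursion by a hockey-stick induction (which, as sketched with $\binom{n}{k}\leq n^{k}/k!$, lands on $k!$ rather than the slightly sharper $(k+1)!$ stated in the theorem), the paper simply invokes the proof of Theorem~$1$ in \cite{Gander} to obtain that intermediate bound.
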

\begin{proof}
As in the proof of Theorem~\ref{thm:convergence},
\begin{eqnarray*}
\left\Vert \mathbf{u}\left(T_{n}\right)-\mathbf{U}_{n}^{k}\right\Vert _{B_{0}} & \leq & \left(1+C\Delta T\right)\left\Vert \mathbf{u}\left(T_{n-1}\right)-\mathbf{U}_{n-1}^{k}\right\Vert _{B_{0}}+\\
 &  & \,\,\,\,\,\,\,\, C\left(\Delta T^{p+1}+\epsilon\right)\left\Vert \mathbf{u}\left(T_{n-1}\right)-\mathbf{U}_{n-1}^{k-1}\right\Vert _{B_{0}},
\end{eqnarray*}
and 
\[
\left\Vert \mathbf{u}\left(T_{n}\right)-\mathbf{U}_{n}^{0}\right\Vert _{B_{0}}\leq\widetilde{C}\left(\epsilon+\Delta T^{p}\right)\left\Vert \mathbf{u}_{0}\right\Vert _{B_{0}},
\]
for some constant $\widetilde{C}$. In fact, it can be shown that
$\widetilde{C}=\left(e^{C\left(T_{n}-T_{0}\right)}-1\right)$ suffices.
Now following the proof of Theorem~$1$ in \cite{Gander}, we obtain
the bound
\begin{eqnarray*}
\left\Vert \mathbf{u}\left(T_{n}\right)-\mathbf{U}_{n}^{k}\right\Vert _{B_{0}} & \leq & \widetilde{C}\left(\epsilon+\Delta T^{p}\right)\frac{\left(C\left(\epsilon+\Delta T^{p+1}\right)\right)^{k}}{\left(k+1\right)!}\left(1+C\Delta T\right)^{n-k-1}n^{k}\\
 & \leq & \widetilde{C}\left(\epsilon+\Delta T^{p}\right)\frac{\left(Cn\epsilon+Cn\epsilon\right)^{k}}{\left(k+1\right)!}e^{C\left(T_{n}-T_{k+1}\right)}\\
 & \leq & \epsilon^{k/2}\left(\epsilon+\epsilon^{p/2}\right)\widetilde{C}\frac{\left(2C\right)^{k}}{\left(k+1\right)!}e^{C\left(T_{n}-T_{k+1}\right)}.
\end{eqnarray*}
where the first inequality used that $\Delta T^{p+1}\leq\epsilon$,
the second inequality used that $1+x\leq e^{x}$, and the third inequality
used that $n\epsilon\leq N\epsilon\leq\epsilon^{1/2}$ (recall the
number $N$ of big time steps is given by $N=\epsilon^{-1/2}$). 
\end{proof}

\section{Numerical examples\label{sec:Numerical-examples}}

\subsection{Rotating Shallow Water Equations\label{sub:Rotating-Shallow-Water}}

We consider as a test problem the non-dimensional rotating shallow
water equations (RSW) equations,

\begin{eqnarray}
\frac{\partial v_{1}}{\partial t}+\frac{1}{\epsilon}\left(-v_{2}+F^{-1/2}\frac{\partial h}{\partial x}\right)+v_{1}\frac{\partial v_{1}}{\partial x} & = & \mu\partial_{x}^{4}v_{1},\nonumber \\
\frac{\partial v_{2}}{\partial t}+\frac{1}{\epsilon}v_{1}+v_{1}\frac{\partial v_{2}}{\partial x} & = & \mu\partial_{x}^{4}v_{2},\label{eq:RSW equations, explicit]}\\
\frac{\partial h}{\partial t}+\frac{F^{-1/2}}{\epsilon}\frac{\partial v_{1}}{\partial x}+\frac{\partial}{\partial x}\left(hv_{1}\right) & = & \mu\partial_{x}^{4}h,\nonumber 
\end{eqnarray}
with spatially periodic boundary conditions on the interval $\left[0,2\pi\right]$.
Here $h\left(x,t\right)$ denotes the surface height of the fluid,
and $v_{1}\left(x,t\right)$ and $v_{2}\left(x,t\right)$ denote the
horizontal fluid velocities. The non-dimensional parameter $\epsilon$
denotes the Rossby number (a ratio of the characteristic advection
time to the rotation time), and is often small (e.g. $10^{-2}$) in
realistic oceanic flows. The non-dimensional parameter $F^{1/2}\epsilon$
gives the Froude number (a ratio of the characteristic fluid velocity
to the gravity wave speed), where $F=\mathcal{O}\left(1\right)$ is
a free parameter which we set to unity in our subsequent calculations.
The scaling we have taken is for quasi-geostrophic dynamics (cf. \cite{Majda1998}),
which governs the fluid flow dominated by strong stratification and
strong constant rotation. As is standard, we also introduce a hyperviscosity
operator $\mu\partial_{x}^{4}$ to prevent singularities from forming;
using hyperviscosity also ensures that the low frequencies are less
effected by dissipation than the high frequencies. The RSW equations
represent a standard framework in which to develop and test new numerical
algorithms for geophysical fluid applications.

To relate equations (\ref{eq:RSW equations, explicit]}) to the abstract
formulation (\ref{eq:full eqn}), we define 
\[
\mathbf{u}\left(t,\mathbf{x}\right)=\left(\begin{array}{c}
v_{1}\left(t,x\right)\\
v_{2}\left(t,x\right)\\
h\left(t,x\right)
\end{array}\right).
\]
Then the system (\ref{eq:RSW equations, explicit]}) can be written
in the form (\ref{eq:full eqn}) by setting

\[
\mathcal{L}=\left(\begin{array}{ccc}
0 & -1 & F^{-1/2}\partial_{x}\\
1 & 0 & 0\\
F^{-1/2}\partial_{x} & 0 & 0
\end{array}\right),\,\,\,\mathcal{D}=\mu\partial_{x}^{4}\left(\begin{array}{ccc}
1 & 0 & 0\\
0 & 1 & 0\\
0 & 0 & 1
\end{array}\right),\,\,\,\mathcal{N}\left(\mathbf{u}\right)=\left(\begin{array}{c}
v_{1}\left(v_{1}\right)_{x}\\
v_{1}\left(v_{2}\right)_{x}\\
\left(hv_{1}\right)_{x}
\end{array}\right).
\]
Because we work in a periodic domain, it is also convenient to consider
(\ref{eq:full eqn}) and (\ref{eq:averaged equation}) in the Fourier
domain. This will also make explicit the time-averaging in (\ref{eq:averaged equation})
(periodicity is not required for our approach, and is only used to
simplify the numerical scheme). A straightforward calculation (see
\cite{Majda1998}) shows that 
\[
\mathcal{L}\left(e^{ikx}\mathbf{r}_{k}^{\alpha}\right)=i\omega_{k}e^{ikx}\mathbf{r}_{k}^{\alpha},\,\,\,\,\omega_{k}^{\alpha}=\alpha\sqrt{1+F^{-1}k^{2}},
\]
where $\mathbf{r}_{k}^{\alpha}$ is a vector that depends on the wavenumber
$k$ and $\alpha=-1,0,1$. Therefore, by expanding the function $\overline{\mathbf{u}}\left(t\right)$
in the basis of eigenfunctions for $\mathcal{L}$, we have that 
\[
e^{\tau\mathcal{L}}\overline{\mathbf{u}}\left(t\right)=\sum_{k\in\mathbb{Z}}e^{ikx}\sum_{\alpha=-1}^{1}e^{i\omega_{k}^{\alpha}}u_{k}^{\alpha}\mathbf{r}_{k}^{\alpha}.
\]

As shown in e.g. \cite{MAJDA:2003}, the nonlinear term $e^{s\mathcal{L}}\mathcal{N}\left(e^{-s\mathcal{L}}\overline{\mathbf{u}}\left(t\right)\right)$
in (\ref{eq:full eqn}) can be written in the form

\begin{equation}
\sum_{k}e^{ikx}\left(\sum_{k_{1}+k_{2}=k}\sum_{\alpha_{1},\alpha_{2}}e^{i\left(\omega_{k}^{\alpha}-\omega_{k_{1}}^{\alpha_{1}}-\omega_{k_{2}}^{\alpha_{2}}\right)s}C_{k,k_{1},k_{2}}^{\alpha,\alpha_{1},\alpha_{2}}u_{k_{1}}^{\alpha_{1}}\left(t\right)u_{k_{2}}^{\alpha_{2}}\left(t\right)\right)\mathbf{r}_{k}^{\alpha}.\label{eq:nonlinear term for average, RSW}
\end{equation}
The interaction coefficients $C_{k,k_{1},k_{2}}^{\alpha,\alpha_{1},\alpha_{2}}$
are explicitly given in e.g. \cite{MAJDA:2003}. From (\ref{eq:nonlinear term for average, RSW})
it is clear that the time average defining $\overline{\mathcal{N}}\left(\overline{\mathbf{u}}\left(t\right)\right)$
in (\ref{eq:average of nonlinear term}) retains only three-wave resonances.
Indeed, since 
\[
\lim_{\text{T}\rightarrow\infty}\frac{1}{T}\int_{0}^{T}e^{i\left(\omega_{k}^{\alpha}-\omega_{k_{1}}^{\alpha_{1}}-\omega_{k_{2}}^{\alpha_{2}}\right)s}ds=\begin{cases}
0, & \,\,\,\,\omega_{k}^{\alpha}-\omega_{k_{1}}^{\alpha_{1}}-\omega_{k_{2}}^{\alpha_{2}}\neq0\\
1, & \,\,\,\,\omega_{k}^{\alpha}-\omega_{k_{1}}^{\alpha_{1}}-\omega_{k_{2}}^{\alpha_{2}}=0,
\end{cases}
\]
we see that the time average is given by 
\begin{equation}
\overline{\mathcal{N}}\left(\overline{\mathbf{u}}\left(t\right)\right)=\sum_{k}e^{ikx}\left(\sum_{\left(k,k_{1},k_{2},\alpha,\alpha_{1},\alpha_{2}\right)\in S_{\text{r}}}C_{k,k_{1},k_{2}}^{\alpha,\alpha_{1},\alpha_{2}}u_{k_{1}}^{\alpha_{1}}\left(t\right)u_{k_{2}}^{\alpha_{2}}\left(t\right)\right)\mathbf{r}_{k}^{\alpha}.\label{explicit average, NL term}
\end{equation}
Here the resonant set $S_{\text{r}}$ is defined by 
\[
S_{\text{r}}=\left\{ \left(k,k_{1},k_{2},\alpha,\alpha_{1},\alpha_{2}\right)\mid k_{1}+k_{2}=k\,\,\,\,\text{and}\,\,\,\,\omega_{k}^{\alpha}-\omega_{k_{1}}^{\alpha_{1}}-\omega_{k_{2}}^{\alpha_{2}}=0\right\} .
\]
The HMM (outlined in Section~\ref{sec:An-asymptotic-slow}) allows
the resonant terms in (\ref{explicit average, NL term}) to be efficiently
computed.

\subsection{Numerical experiments on the RSW equations\label{sub:Numerical-experiments-on}}

We solve equations (\ref{eq:RSW equations, explicit]}) with the initial
conditions consisting of $v_{1}\left(0,x\right)=v_{2}\left(0,x\right)=0$
and 
\begin{equation}
h\left(x,0\right)=c_{1}\left(e^{-4\left(x-\pi/2\right)^{2}}\sin\left(3\left(x-\pi/2\right)\right)+e^{-2\left(x-\pi\right)^{2}}\sin\left(8\left(x-\pi\right)\right)\right)+c_{0},\label{eq:initial condition 1-1}
\end{equation}
where $c_{1}$ and $c_{2}$ are chosen so that 
\[
\int_{0}^{2\pi}h\left(0,x\right)dx=0,\,\,\,\,\max_{x}\left|h\left(x,0\right)\right|=1.
\]
We also choose the viscosity parameter $\mu=10^{-4}$, and the values
$\epsilon=10^{-2}$, $\epsilon=10^{-1}$, and $\epsilon=1$ (corresponding
to strong, weak, and no scale separation), which are physically realistic
values in geophysical ocean flows; smaller values of $\epsilon$ would
yield even greater parallel speedup, but are less physically relevant.
Although the choice of the initial height $h\left(x,0\right)$ in
(\ref{eq:initial condition 1-1}) is somewhat arbitrary, the conclusions
in this section appear to be insensitive to the initial conditions
(as long as they are sufficiently smooth). For all three choices of
$\epsilon$, we perform the following numerical experiments. First,
we compute the solution using the asymptotic parareal method outlined
in Section~\ref{sec:Parareal-Method}. We then compare the estimated
parallel speedup with the results of using three numerical integrators
in serial: exponential time differencing $4$th order Runge-Kutta
(ETDRK4) \cite{CoxMathews:2002}, the operator integrating factor
method (cf. \cite{KassamTrefethen:2005}), and Strang splitting (see
Algorithm~\ref{alg:Fine-solver-Strang} of Section~\ref{sec:Parareal-Method}).
Finally, for comparison we compute the solution using the standard
parareal method by solving the full equation (\ref{eq:full eqn})
using big and small step sizes $\Delta T$ and $\Delta t$, where
we use Strang splitting for the coarse solver (we find that using
ETDRK4 or OIFS as a coarse solver yields similar parallel speedup).
For the sake of simplicity, we also assume that the cost of computing
a single time step using ETDRK4, OIFS, and Strang splitting is the
same; a more careful analysis that takes into account the number of
operations required for each integrator would yield greater parallel
speedup.

As we show below, with the coarse time step $\Delta T\geq3/10$, we
find similar convergence and accuracy in the asymptotic parareal method
for all values of $\epsilon$. The parallel speedup for $\epsilon=10^{-2}$
is about a factor of $100$ relative to using ETDRK4, the integrating
factor method, or Strang splitting in serial. In contrast, the standard
parareal method (using ETDKRK4, the integrating factor method, or
Strang splitting as a coarse solver) requires much smaller time steps,
resulting in a parallel speedup that is about $10$ times smaller
for $\epsilon=10^{-2}$. In our estimates of the speedup, we assume
that the time average (\ref{eq:numerical average, discretized}) is
computed in parallel, and that the costs of computing the coarse and fine time steps
are the same. 

In our first numerical experiment, we take $\epsilon=10^{-2}$. For
the asymptotic parareal method, we use a coarse time step $\Delta T=50\times\epsilon=1/2$,
a fine time step $\Delta t=\epsilon/25=1/2500$, and $N=\Delta T/\Delta t=1250$
time intervals $\left[\left(n-1\right)\Delta T,n\Delta T\right]$.%
{} We use $T_{0}\left(\epsilon\right)=150\times\epsilon=3$ and $\overline{M}=450$
terms in the time average (\ref{eq:numerical average, discretized})
for the coarse solver. Even though the solution executes many temporal
oscillations within the time intervals $\left[n\Delta T,\left(n+1\right)\Delta T\right]$---see
Figure~\ref{fig:4th fourier coeff} for a plot of the $4$th Fourier
coefficient $\hat{h}\left(k,t\right)$ for $0\leq t\leq\Delta T$---the
method converges in a small number of iterations. In fact, Figure~\ref{fig:errors parareal HMM, eps 2}
shows the maximum relative $L^{\infty}$ error,
\[
\max_{0\leq n\leq N}\left\Vert \mathbf{U}_{n}^{k}-\mathbf{u}\left(n\Delta T,\cdot\right)\right\Vert _{\infty}/\left\Vert \mathbf{u}\left(n\Delta T,\cdot\right)\right\Vert _{\infty},
\]
as a function of the iteration levels $k=0,1,\ldots,5$. Comparing
the parallel speedup relative to directly integrating the full equation
(\ref{eq:full eqn}) using ETDRK4, the integrating factor method,
and Strang splitting, we find that we need step sizes $\Delta t=\epsilon/25$,
$\Delta t=\epsilon/20$, and $\Delta t=\epsilon/20$, respectively,
in order to obtain a comparable accuracy. Therefore, using the complexity
analysis in Section~\ref{sec:Error-and-complexity}, we expect a
parallel speedup of 
\[
\frac{N\left(\Delta T/\Delta t\right)}{5\left(\left(\Delta T/\Delta t\right)+N\right)+N}\approx110.
\]

To constrast this speedup with that obtained using a standard version
of the parareal method, we show in Figure~\ref{fig:errors parareal HMM, eps 2}
the results obtained via a coarse solver based on solving the full
equation (\ref{eq:full eqn}) using Strang splitting (the results
are similar with using ETDRK4 or the integrating factor method as
coarse solvers). In this experiment, we take $\Delta t=\epsilon/25=1/2500$,
$\Delta T=4\epsilon=1/25$, and $N=\Delta T/\Delta t=100$, which
yields an expected parallel speedup that is $10$ times smaller.

\begin{figure}
\caption{Plot of the $4$th Fourier coefficient $\hat{h}\left(k,t\right)$
as a function of time $0\leq t\leq\Delta T$, for $\epsilon=10^{-2}$\label{fig:4th fourier coeff}}

\includegraphics[scale=0.4]{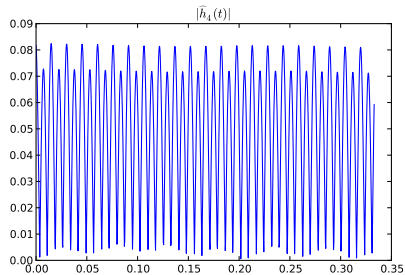}
\end{figure}

\begin{figure}
\caption{Maximum relative $L^{\infty}$ error (on a$\log_{10}$ scale), $\max_{0\leq n\leq N}\left\Vert \mathbf{U}_{n}^{k}-\mathbf{u}\left(n\Delta T,\cdot\right)\right\Vert _{\infty}/\left\Vert \mathbf{u}\left(n\Delta T,\cdot\right)\right\Vert _{\infty}$,
as a function of the iteration level $k$; the initial condition (\ref{eq:initial condition 1-1})
is used, and $\epsilon=10^{-2}$. The solid green line depicts the
errors from the asymptotic parareal method (Parareal-HMM) with a coarse
time step $\Delta T=50\epsilon$, and the dashed and dashed-dotted
lines depict the errors from the standard parareal method using
Strang splitting (Parareal-Strang) with $\Delta T=4\epsilon$ and
$\Delta T=5\epsilon,$ respectively. A fine time step $\Delta t=\epsilon/25$
is used for all three cases. \label{fig:errors parareal HMM, eps 2}}

\includegraphics[scale=0.5]{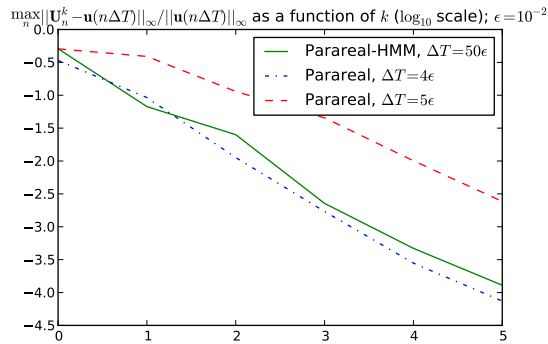}
\end{figure}

\begin{figure}
\caption{Maximum relative $L^{\infty}$ error (on a$\log_{10}$ scale), $\max_{0\leq n\leq N}\left\Vert \mathbf{U}_{n}^{k}-\mathbf{u}\left(n\Delta T,\cdot\right)\right\Vert _{\infty}/\left\Vert \mathbf{u}\left(n\Delta T,\cdot\right)\right\Vert _{\infty}$,
as a function of the iteration level $k$; the initial condition (\ref{eq:initial condition 1-1})
is used, and $\epsilon=10^{-1}$. The solid line depicts the
errors from the asymptotic parareal method (Parareal-HMM) with a coarse
time step $\Delta T=3\epsilon$, and the dashed and dashed-dotted
lines depict the errors from the standard parareal method using
Strang splitting (Parareal-Strang) with $\Delta T=\epsilon$ and $\Delta T=2\epsilon,$
respectively. A fine time step $\Delta t=\epsilon/50$ is used for
all three cases.\label{fig:errors parareal HMM, eps 1}}

\includegraphics[scale=0.5]{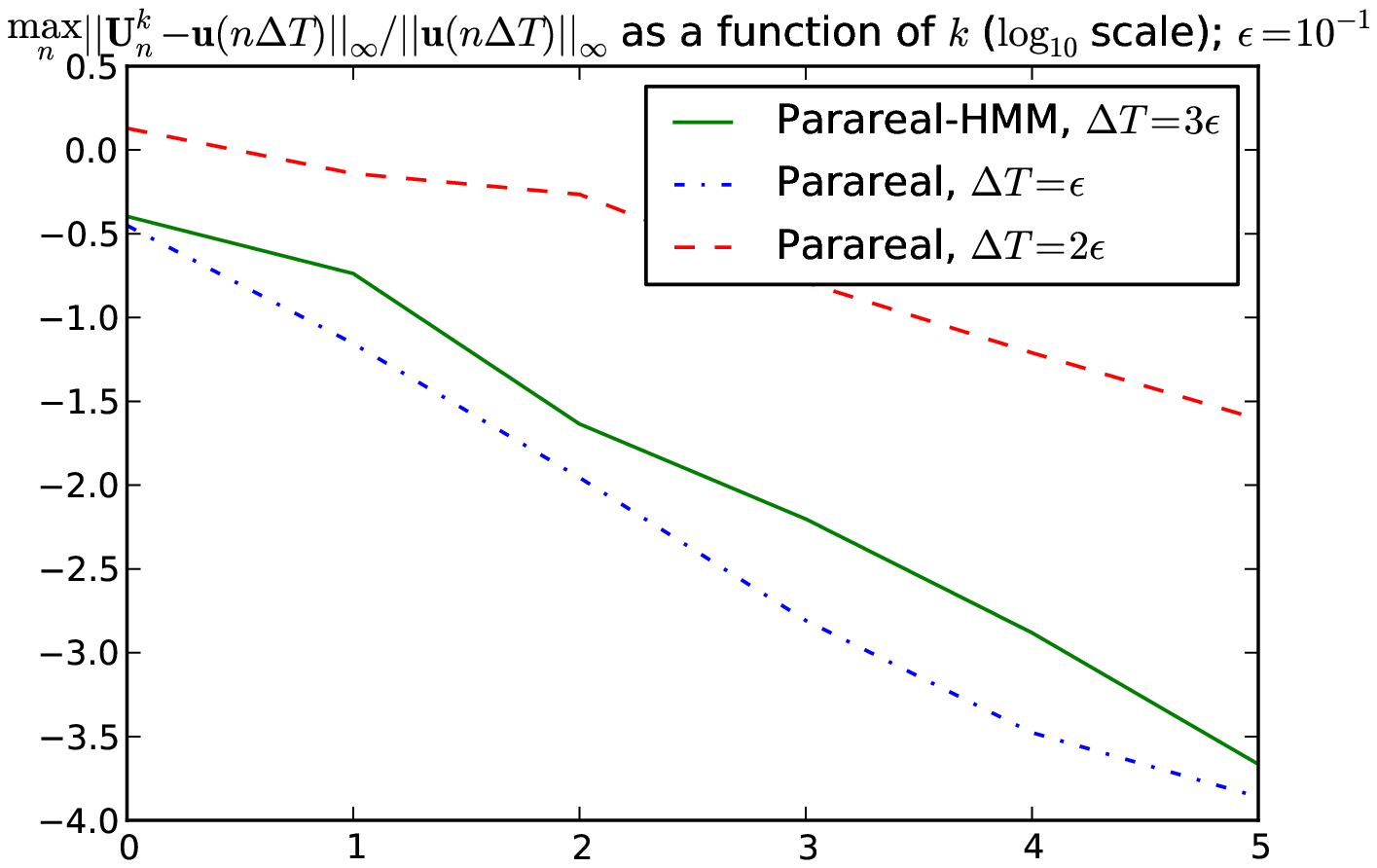}
\end{figure}

\begin{figure}
\caption{Maximum relative $L^{\infty}$ error (on a$\log_{10}$ scale), $\max_{0\leq n\leq N}\left\Vert \mathbf{U}_{n}^{k}-\mathbf{u}\left(n\Delta T,\cdot\right)\right\Vert _{\infty}/\left\Vert \mathbf{u}\left(n\Delta T,\cdot\right)\right\Vert _{\infty}$,
as a function of the iteration level $k$; the initial condition (\ref{eq:initial condition 1-1})
is used, and $\epsilon=1$. The solid line depicts the errors
from the asymptotic parareal method (Parareal-HMM) with a coarse time
step $\Delta T=3/10\epsilon$, and the dashed and dashed-dotted
lines depict the errors from the standard parareal method using
Strang splitting (Parareal-Strang) with $\Delta T=3/10\epsilon$ and
$\Delta T=4/10\epsilon,$ respectively. A fine time step $\Delta t=\epsilon/200$
is used for all three cases.\label{fig:errors parareal HMM, eps 0}}

\includegraphics[scale=0.5]{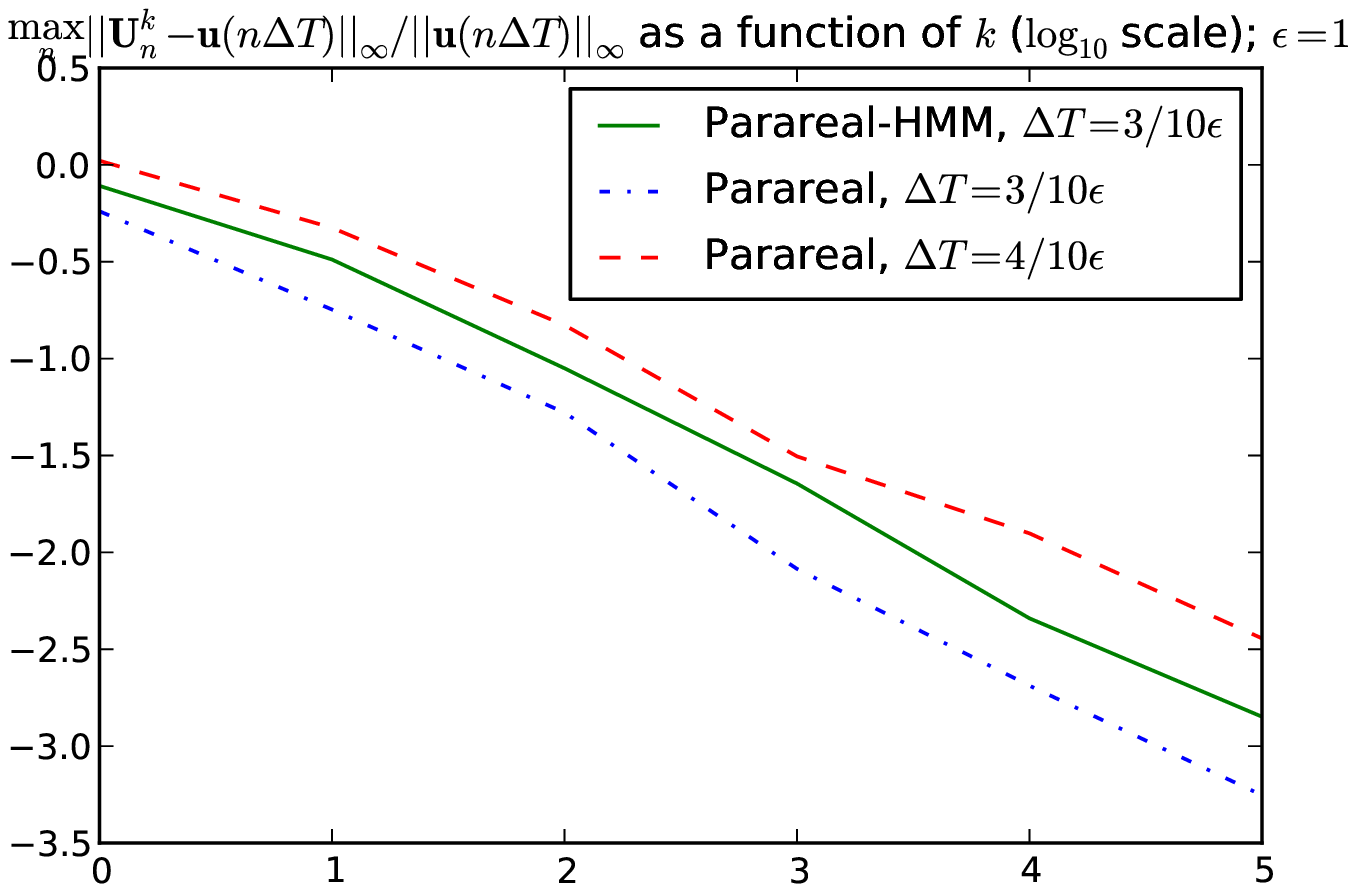}
\end{figure}

In our second experiment, we take $\epsilon=10^{-1}$, corresponding
to weak scale separation. In Figure~\ref{fig:errors parareal HMM, eps 1},
we show the maximum relative $L^{\infty}$ error as a function of
the iteration levels $k=0,1,\ldots,5$.%
. Here we used a coarse time step $\Delta T=3\epsilon=3/10$, a fine
time step $\Delta t=\epsilon/25=1/250$, and $N=150$ time intervals
$\left[\left(n-1\right)\Delta T,n\Delta T\right]$. We also take $T_{0}\left(\epsilon\right)=5\times\epsilon=1/2$
and $\overline{M}=10$. Comparing the parallel speedup relative to
directly integrating the full equation (\ref{eq:full eqn}) using
ETDRK4, integrating factor method, and Strang splitting, we find that
we need time steps $\Delta t=\epsilon/20=1/200$, $\Delta t=\epsilon/10=1/100$,
and $\Delta t=\epsilon/20=1/200$, respectively, in order to obtain
a comparable accuracy. Thus, we obtain an estimated parallel speed
of about $1/6\left(\Delta T/\Delta t\right)=13$ relative to using
Strang splitting in serial; taking into account the number of operations
required for each time step in ETDRK4 and the integrating factor method,
the estimated parallel speedup relative to these integrators will
be comparable. In contrast, Figure~\ref{fig:errors parareal HMM, eps 1}
also shows the relative $L^{\infty}$ errors, where this time the
standard parareal algorithm is used with Strang splitting for the
coarse and fine solvers, and using the same step sizes $\Delta T=\epsilon=1/20$
and $\Delta t=\epsilon/20=1/200$ (again, similar speedup is obtained
with ETDRK4 and the integrating factor method). Here the estimated
parallel speedup is $3$ times smaller.

In our final experiment, we take $\epsilon=1$, corresponding to no
scale separation. In Figure~\ref{fig:errors parareal HMM, eps 0},
we show the relative $L^{\infty}$ error for the asymptotic parareal
method, where we use a coarse time step $\Delta T=3/10$, a fine time
step $\Delta t=1/200$, and $N=60$ time intervals $\left[\left(n-1\right)\Delta T,n\Delta T\right]$.
We also take $T_{0}\left(\epsilon\right)=3/10$ and $\overline{M}=10$.
Comparing the parallel speedup relative to directly integrating the
full equation (\ref{eq:full eqn}), we obtain an estimated parallel
speedup of about a factor of $5$; the speedup with the standard parareal
method (and using Strang splitting for a coarse solver) is about the
same.

\section{Summary\label{sub:summary}}

In this paper we have introduced an asymptotic-parallel-in-time method
for solving highly oscillatory PDEs. The method is a modification
of the parareal algorithm introduced by \cite{LI-MA-TU}. The modification
replaces the coarse solver used in \cite{LI-MA-TU} by a numerically
computed locally asymtotic solution based on the asymptotic mathematical
structure of the equations (\cite{schochet1994}, \cite{MAJDA:2003},
\cite{Majda1998}, \cite{Wingate2011}) and concepts used in HMM.

In addition to presenting the method we also include pseudocode.
We discuss the performance of the method when $\epsilon=$1, which
is important for using the method in realistic simulations where the
time scale separation may vary in space and time. We also present
a complexity analysis that shows that the parallel speed-up increases
as $\epsilon$ decreases which results in an arbitrarily greater efficiency
gain relative to standard numerical integrators and a Theorem following
\cite{Gander} that shows that as long as the constants remain bounded
the error decreases by a factor of $\epsilon^{1/2}$ after each iteration.

We also present numerical experiments for the shallow water equations.
These results demonstrate that the parallel speedup is more than $100$
relative to exponential integrators such as ETDRK4 (for realistic
parameter values in the shallow water equations); the speedup is also
more than $10$ relative to using the standard parareal method with
a linearly exact coarse solver. Finally, the results demonstrate that
the method works in the absence of scale separation, and with as much
speedup as the standard parareal method.


\begin{thebibliography}{10}

\bibitem{ArielEngquistKimLeeTsai:2013}
G.~Ariel, B.~Engquist, S.~Kim, Y.~Lee, and R.~Tsai.
\newblock A multiscale method for highly oscillatory dynamical systems using a
  poincaré map type technique.
\newblock {\em Journal of Scientific Computing}, 54:247--268, 2013.

\bibitem{ArielEngquistTsai:2009}
Gil Ariel, Bjorn Engquist, and Richard Tsai.
\newblock A multiscale method for highly oscillatory ordinary differential
  equations with resonance.
\newblock {\em Math. Comp.}, 78(266):929--956, 2009.

\bibitem{ArielEngquistBjörnKreissTsai:2009}
Gil Ariel, Björn Engquist, Heinz-Otto Kreiss, and Richard Tsai.
\newblock Multiscale computations for highly oscillatory problems.
\newblock In Björn Engquist, Per Lötstedt, and Olof Runborg, editors, {\em
  Multiscale Modeling and Simulation in Science}, volume~66 of {\em Lecture
  Notes in Computational Science and Engineering}, pages 237--287. Springer
  Berlin Heidelberg, 2009.

\bibitem{Baffico2002}
L.~Baffico, S.~Bernard, Y.~Maday, G.~Turinici, and G.~Z\'erah.
\newblock Parallel-in-time molecular-dynamics simulations.
\newblock {\em Phys. Rev. E}, 66:057701, Nov 2002.

\bibitem{Bal2005}
Guillaume Bal.
\newblock On the convergence and the stability of the parareal algorithm to
  solve partial differential equations.
\newblock In {\em Domain Decomposition Methods in Science and Engineering},
  volume~40 of {\em Lecture Notes in Computational Science and Engineering},
  pages 425--432. Springer Berlin Heidelberg, 2005.

\bibitem{CastellaChartierFaou2009}
F.~Castella, P.~Chartier, and E.~Faou.
\newblock An averaging technique for highly oscillatory {H}amiltonian problems.
\newblock {\em SIAM J. Numer. Anal.}, 47(4):2808--2837, 2009.

\bibitem{Charney1948}
J.G. Charney.
\newblock On the scale of atmospheric motions.
\newblock {\em Geofysiske Publikasjoner}, 17(2):3--17, 1948.

\bibitem{Charney1949}
J.G. Charney.
\newblock On a physical basis for numerical prediction of large-scale motions
  in the atmosphere.
\newblock {\em Journal of Meterology}, 6:371--385, 1949.

\bibitem{CoxMathews:2002}
S.M. Cox and P.C. Matthews.
\newblock Exponential time differencing for stiff systems.
\newblock {\em Journal of Computational Physics}, 176(2):430 -- 455, 2002.

\bibitem{Dai}
X.~Dai and Y.~Maday.
\newblock Stable parareal in time method for first- and second-order hyperbolic
  systems.
\newblock {\em SIAM Journal on Scientific Computing}, 35(1):A52--A78, 2013.

\bibitem{Weinan:2003}
Weinan E.
\newblock Analysis of the heterogeneous multiscale method for ordinary
  differential equations.
\newblock {\em Commun. Math. Sci.}, 1(3):423--436, 2003.

\bibitem{E-ENG:2003}
Weinan E and Bjorn Engquist.
\newblock Multiscale modeling and computation.
\newblock {\em Notices Amer. Math. Soc}, 50(50):1062--1070, 2003.

\bibitem{WeinanEngquistXiantao2007}
Weinan E, Bjorn Engquist, Xiantao Li, Weiqing Ren, and Eric Vanden-Eijnden.
\newblock Heterogeneous multiscale methods: A review.
\newblock {\em Communications in Computational Physics}, 2(3):367--450, June
  2007.

\bibitem{Engblom:2009}
S.~Engblom.
\newblock Parallel in time simulation of multiscale stochastic chemical
  kinetics.
\newblock {\em Multiscale Modeling and Simulation}, 8(1):46--68, 2009.

\bibitem{EngquistTsai:2005}
Bjorn Engquist and Yen-Hsi Tsai.
\newblock Heterogeneous multiscale methods for stiff ordinary differential
  equations.
\newblock {\em Mathematics of Computation}, 74(252):pp. 1707--1742, 2005.

\bibitem{FarhatChandesris:2003}
Charbel Farhat and Marion Chandesris.
\newblock Time-decomposed parallel time-integrators: theory and feasibility
  studies for fluid, structure, and fluid?structure applications.
\newblock {\em International Journal for Numerical Methods in Engineering},
  58(9):1397--1434, 2003.

\bibitem{Fischer04aparareal}
Paul~F. Fischer, Frédéric Hecht, and Yvon Maday.
\newblock A parareal in time semi-implicit approximation of the navier-stokes
  equations.
\newblock In {\em Proceedings of Fifteen International Conference on Domain
  Decomposition Methods}, pages 433--440. Springer Verlag, 2004.

\bibitem{Gander:2008}
Martin~J. Gander.
\newblock Analysis of the parareal algorithm applied to hyperbolic problems
  using characteristics.
\newblock {\em Bol. Soc. Esp. Mat. Apl. S$\vec{\rm e}$MA}, (42):21--35, 2008.

\bibitem{Gander}
MartinJ. Gander and Ernst Hairer.
\newblock Nonlinear convergence analysis for the parareal algorithm.
\newblock In Ulrich Langer, Marco Discacciati, DavidE. Keyes, OlofB. Widlund,
  and Walter Zulehner, editors, {\em Domain Decomposition Methods in Science
  and Engineering XVII}, volume~60 of {\em Lecture Notes in Computational
  Science and Engineering}, pages 45--56. Springer Berlin Heidelberg, 2008.

\bibitem{HairerLubichWanner:2010}
Ernst Hairer, Christian Lubich, and Gerhard Wanner.
\newblock {\em Geometric numerical integration}, volume~31 of {\em Springer
  Series in Computational Mathematics}.
\newblock Springer, Heidelberg, 2010.
\newblock Structure-preserving algorithms for ordinary differential equations,
  Reprint of the second (2006) edition.

\bibitem{Jones1999}
D.a. Jones, a.~Mahalov, and B.~Nicolaenko.
\newblock {A Numerical Study of an Operator Splitting Method for Rotating Flows
  with Large Ageostrophic Initial Data}.
\newblock {\em Theoretical and Computational Fluid Dynamics}, 13(2):143, 1999.

\bibitem{KassamTrefethen:2005}
Aly khan Kassam, Lloyd, and N.~Trefethen.
\newblock Fourth-order time stepping for stiff pdes.
\newblock {\em SIAM J. Sci. Comput}, 26:1214--1233, 2005.

\bibitem{Lawson:1967}
J.~Douglas Lawson.
\newblock Generalized runge-kutta processes for stable systems with large
  lipschitz constants.
\newblock {\em SIAM Journal on Numerical Analysis}, 4(3):pp. 372--380, 1967.

\bibitem{Legoll2012b}
F.~{Legoll}, T.~{Lelievre}, and G.~{Samaey}.
\newblock {A micro-macro parareal algorithm: application to singularly
  perturbed ordinary differential equations}.
\newblock {\em ArXiv e-prints}, April 2012.

\bibitem{Legoll2012}
Fr\'{e}d\'{e}ric Legoll, Xiaoying Dai, Claude {Le Bris}, and Yvon Maday.
\newblock {Symmetric parareal algorithms for Hamiltonian systems}.
\newblock {\em ESAIM: Mathematical Modelling and Numerical Analysis}, pages
  1--56, September 2012.

\bibitem{LI-MA-TU}
Jacques-Louis Lions, Yvon Maday, and Gabriel Turinici.
\newblock {R\'{e}solution d'\{EDP\} par un sch\'{e}ma en temps
  ``parar\'{e}el''}.
\newblock {\em C. R. Acad. Sci. Paris S\'{e}r. I Math.}, 332(7):661--668, 2001.

\bibitem{Maday:2007}
Yvon Maday.
\newblock Parareal in time algorithm for kinetic systems based on model
  reduction.
\newblock In {\em High-dimensional partial differential equations in science
  and engineering}, volume~41 of {\em CRM Proc. Lecture Notes}, pages 183--194.
  Amer. Math. Soc., Providence, RI, 2007.

\bibitem{MadayTurinici2003}
Yvon Maday and Gabriel Turinici.
\newblock Parallel in time algorithms for quantum control: Parareal time
  discretization scheme.
\newblock {\em International Journal of Quantum Chemistry}, 93(3):223--228,
  2003.

\bibitem{MAJDA:2003}
Andrew Majda.
\newblock {\em {Introduction to \{PDE\}s and waves for the atmosphere and
  ocean}}, volume~9 of {\em Courant Lecture Notes in Mathematics}.
\newblock New York University Courant Institute of Mathematical Sciences, New
  York, 2003.

\bibitem{Majda1998}
Andrew~J. Majda and Pedro Embid.
\newblock {Averaging over Fast Gravity Waves for Geophysical Flows with
  Unbalanced Initial Data}.
\newblock {\em Theoretical and Computational Fluid Dynamics}, 11(3-4):155--169,
  June 1998.

\bibitem{NadigaHechtMargolinSmolarkiewicz:1997}
B~Nadiga, Matthew Hecht, L~Margolin, and Piotr Smolarkiewicz.
\newblock On simulating flows with multiple time scales using a method of
  averages.
\newblock {\em Theoretical and Computational Fluid Dynamics}, 9(3-4):281--292,
  1997.

\bibitem{PetcuTemamWirosoetisno:2005}
M.~Petcu, R.~Temam, and D.~Wirosoetisno.
\newblock Renormalization group method applied to the primitive equations.
\newblock {\em J. Differential Equations}, 208(1):215--257, 2005.

\bibitem{Reich1995}
Sebastian Reich.
\newblock {Smoothed dynamics of highly oscillatory Hamiltonian systems},
  December 1995.

\bibitem{SandersVerhulst:1985}
J.~A. Sanders and F.~Verhulst.
\newblock {\em Averaging methods in nonlinear dynamical systems / J.A. Sanders,
  F. Verhulst}.
\newblock Springer-Verlag, New York:, 1985.

\bibitem{schochet1994}
S.~Schochet.
\newblock Fast singular limits of hyperbolic pde's.
\newblock {\em Journal of Differential Equations}, 114:476--512, 1994.

\bibitem{Smith2005}
Leslie~M. Smith and Youngsuk Lee.
\newblock {On near resonances and symmetry breaking in forced rotating flows at
  moderate Rossby number}.
\newblock {\em Journal of Fluid Mechanics}, 535(2005):111--142, July 2005.

\bibitem{Staff:2003}
G.~A. Staff.
\newblock The parareal algorithm. a survey of present work.
\newblock Technical report, Norwegian University of Science and Technology,
  Dept. of Math. Sciences, 2003.

\bibitem{Wingate2011}
Beth~a. Wingate, Pedro Embid, Miranda Holmes-Cerfon, and Mark~a. Taylor.
\newblock {Low Rossby limiting dynamics for stably stratified flow with finite
  Froude number}.
\newblock {\em Journal of Fluid Mechanics}, 676(2011):546--571, April 2011.

\end{thebibliography}

\end{document}